\definecolor{fgreen}{RGB}{44,144, 14}
\numberwithin{equation}{section} 
\newtheorem{theorem}{Theorem}[section]
\newtheorem{lemma}[theorem]{Lemma} 
\theoremstyle{definition}
\newtheorem{definition}[theorem]{Definition}
\def\C{\mathbb C}
\def\H{\mathbb H}
\def\N{\mathbb N}
\def\P{\mathbb P}
\def\C{\mathbb {C}}
\def\N{\mathbb {N}}
\def\H{\mathbb {H}}
\begin{document} 
\title[Quaternionic Projective Transformations by Equicontinuity Regions]{Classification of Quaternionic Projective Transformations by Equicontinuity Regions}
	\author[S.  Dutta, K. Gongopadhyay and R. Mondal]{Sandipan Dutta, Krishnendu Gongopadhyay and 
		Rahul Mondal}
	
	\address{Mizoram University,
		Tanhril, Aizawl 796004, Mizoram, India}
	\email{sandipandutta98@gmail.com, mzut330@mzu.edu.in}
	
	\address{Indian Institute of Science Education and Research (IISER) Mohali,
		Knowledge City,  Sector 81, S.A.S. Nagar 140306, Punjab, India}
	\email{krishnendug@gmail.com, krishnendu@iisermohali.ac.in}

	\address{Indian Institute of Science Education and Research (IISER) Mohali,
		Knowledge City,  Sector 81, S.A.S. Nagar 140306, Punjab, India}
	\email{canvas.rahul@gmail.com}
	
	\subjclass[2010]{Primary 20H10; Secondary 15B33, 22E40}
	\keywords{Quaternions, Projective transformations,  Kleinian groups,  Region of equicontinuity, Limit set, Quternionic projective space}
	\date{ @\currenttime , \today}
\begin{abstract}
We describe the equicontinuity regions of cyclic subgroups of the quaternionic projective linear group $\mathrm{PSL}(n+1,\mathbb{H})$. We show that these regions depend solely on the dynamical type of the generator $g$, i.e. whether $g$ is elliptic, parabolic, loxodromic or loxoparabolic. This yields an analytic interpretation of the dynamical classification of the  elements. In particular, elliptic cyclic groups act equicontinuously on all of the quaternionic projective space, while for the parabolic, loxodromic and loxoparabolic elements the equicontinuity region is determined by explicit quaternionic projective subspaces arising from the generator’s Jordan form.
\end{abstract}
    	\maketitle 

	\section{Introduction} 

The dynamics of projective actions of ${\rm PSL}(2, \C)$  has long constituted a central theme in the study of Kleinian groups and geometric group theory. For discrete groups acting on higher dimensional complex projective space, the \emph{equicontinuity region} plays a fundamental role as an invariant that generalizes the classical domain of discontinuity (see, for instance, Cano–Seade~\cite{can2}, Cano–Navarrete–Seade~\cite{cns}, and Cano–Loeza–Ucan Puc~\cite{cano_main}). It represents the largest open subset on which the group action induces a regular, well-behaved dynamics. It has proven to be a powerful data in understanding limit sets, orbit structures, and global geometric properties of complex projective transformations, cf. \cite{cns}. 

Recently, the {quaternionic analogues of complex Kleinian groups} has gained some attention which is motivated by the rich interplay between quaternionic hyperbolic geometry and the dynamics of projective transformations over the quaternions. The quaternionic setting has a new layer of complexity due to the non-commutativity of the quaternionic multiplication. This alters the framework that is used in the complex case.  Many of the tools and results in complex projective dynamics cannot be directly transferred, and must instead be reformulated to accommodate the subtleties of quaternionic analysis.

In our earlier works~\cite{dgm} and \cite{dgl},  we initiated a systematic study of the dynamics of \emph{quaternionic cyclic groups} and obtained a detailed description of their Kulkarni limit sets. This investigation extended the corresponding results established in the complex setting by Cano, Loeza, and Ucan-Puc~\cite{cano_main}. It is worth recalling that the study of complex Kleinian groups was originally initiated by Seade and Verjovsky~\cite{sv99, sv01}, and subsequently developed in a series of works by Cano, Barrera, Navarrete, and others, see the comprehensive monograph~\cite{cns} or the survey \cite{ bcnss} for an overview. The theory that has emerged in the complex case offers deep insights into the global dynamics of projective transformations, but its quaternionic counterpart remains relatively unexplored.

The notion of \emph{equicontinuity}, which we aim to investigate in this paper, is rooted in the classical Arzelà–Ascoli theorem, providing a compactness criterion for families of functions. In~\cite{can}, Cano and Seade 
introduced the concept of the equicontinuity region for ${\rm PSL}(3, \C)$ action on the complex projective space  $\mathbb{P}^2_\mathbb{C}$ and established its relation with the Kulkarni limit set. This theory was subsequently generalized to higher-dimensional complex projective spaces $\mathbb{P}^n_\mathbb{C}$ in~\cite{cano_main}, where the authors provided a classification of the equicontinuity regions for cyclic subgroups of $\mathrm{PSL}(n+1, \mathbb{C})$. This gave a  dynamical classification of the complex projective transformations. 

The principal aim of the present article is to {extend this work to the quaternionic setting} in order to provide a complete classification of equicontinuity regions for cyclic subgroups of $\mathrm{PSL}(n+1, \mathbb{H})$. To formulate our main results, we need the following notions. 

\begin{definition}
A family $\mathcal{F}$ of maps from a topological space into $\mathbb{P}^n_\mathbb{H}$ is said to be \emph{normal} if every sequence in $\mathcal{F}$ contains a subsequence that converges uniformly on compact subsets.
\end{definition}

Using this concept, we define the equicontinuity region for projective actions over the quaternions.

\begin{definition}
\label{eqcnt}
Let $G$ be a discrete subgroup of $\mathrm{PSL}(n+1, \mathbb{H})$. The \emph{region of equicontinuity} of $G$, denoted $\mathrm{Eq}(G)$, is the set of points $z \in \mathbb{P}^n_\mathbb{H}$ such that there exists an open neighborhood $U$ of $z$ for which the restricted family
\[
G_U = \{ g|_U : g \in G \}
\]
is normal. 

\medskip Intuitively, $\mathrm{Eq}(G)$ consists of those points in $\mathbb{P}^n_\mathbb{H}$ where the group action behaves in a uniformly controlled manner, where orbits do not exhibit erratic divergence or collapse, and hence where local compactness and convergence properties are preserved.
\end{definition}

In order to classify the equicontinuity regions of cyclic quaternionic projective groups, we analyze elements $g \in \mathrm{PSL}(n+1, \mathbb{H})$ via their quaternionic Jordan canonical forms, and determine the corresponding equicontinuity region $\mathrm{Eq}(G)$ for the cyclic subgroup $G = \langle g \rangle$. The dynamical nature of $g$ can be described by the following classification.
\begin{definition}
\label{def:classification}
\label{trichotomy}
Let $g \in \mathrm{PSL}(n+1, \mathbb{H})$. Then:
\begin{enumerate}[(i)]
    \item $g$ is called \emph{elliptic} if it is semisimple and all its eigenvalue classes are represented by quaternions of unit modulus;
    \item $g$ is called \emph{parabolic} if it is not semisimple and all its eigenvalues have unit modulus;
     \item $g$ is called \emph{loxodromic} if it is semisimple but not all eigenvalue classes are represented by unit modulus quaternions;
    \item $g$ is called \emph{loxoparabolic} if it is not semisimple and at least one eigenvalue has non-unit modulus.
\end{enumerate}
\end{definition}

Our approach builds upon the {quaternionic Jordan canonical form} together with the method of {pseudo-projective limits}. Our methods are inspired by the strategy developed in the complex setting by Cano and Seade in~\cite{can}. The quaternionic structure introduces new phenomena not present in the complex case, arising from the non-commutative nature of the underlying field and the resulting behavior of similarity classes of eigenvalues.
\begin{theorem}
Let $\gamma \in \mathrm{PSL}(n+1,\mathbb{H})$ be a projective transformation and let
\[
\Gamma := \langle \gamma \rangle \subset \mathrm{PSL}(n+1,\mathbb{H})
\]
be the cyclic subgroup generated by $\gamma$. Denote by $\mathrm{Eq}(\Gamma)\subset\mathbb{P}^n_{\mathbb{H}}$ the equicontinuity set of the family $\Gamma$. Then the following holds. 
\begin{enumerate}[(i)]
    \item The element $\gamma$ is \emph{elliptic} if and only if $\mathrm{Eq}(\Gamma)=\mathbb{P}^n_{\mathbb{H}}$.
    
    \item The element $\gamma$ is \emph{parabolic} if and only if the complement $\mathbb{P}^n_{\mathbb{H}}\setminus\mathrm{Eq}(\Gamma)$ is a single projective subspace $L$ (see Theorem ~\ref{th:par-main} for a precise description).
    
    \item The element $\gamma$ is \emph{loxodromic} if and only if it is diagonalizable and the complement $\mathbb{P}^n_{\mathbb{H}}\setminus\mathrm{Eq}(\Gamma)$ can be described as the union of two distinct proper projective subspaces $L_1, L_2$ of $\mathbb{P}^n_{\mathbb{H}}$ (see Theorem ~\ref{th:loxodromic} for a precise description).
    
    \item The element $\gamma$ is \emph{loxoparabolic} if and only if it is not diagonalizable and the complement $\mathbb{P}^n_{\mathbb{H}}\setminus\mathrm{Eq}(\Gamma)$ can be described as the union of two distinct proper projective subspaces $L_1, L_2$ of $\mathbb{P}^n_{\mathbb{H}}$ (see Theorem ~\ref{th:loxoparabolic} for a precise description).
\end{enumerate}
\end{theorem}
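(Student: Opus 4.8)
The plan is to exploit the fact that Definition \ref{trichotomy} partitions $\mathrm{PSL}(n+1,\mathbb{H})$ into four mutually exclusive and exhaustive classes, organized by the two independent binary attributes ``semisimple or not'' and ``all eigenvalue classes of unit modulus or not.'' Concretely, elliptic $=$ (semisimple, all unit), parabolic $=$ (non-semisimple, all unit), loxodromic $=$ (semisimple, not all unit), and loxoparabolic $=$ (non-semisimple, not all unit). Since semisimplicity over $\mathbb{H}$ coincides with diagonalizability, this matches the diagonalizable/non-diagonalizable dichotomy appearing in items (iii) and (iv). The first reduction is to observe that $\mathrm{Eq}$ is conjugation-equivariant, i.e.\ $\mathrm{Eq}(h\Gamma h^{-1}) = h\cdot\mathrm{Eq}(\Gamma)$ for $h \in \mathrm{PSL}(n+1,\mathbb{H})$, because conjugation by the fixed projective homeomorphism $h$ carries normal families to normal families. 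Hence I may assume $\gamma$ is in quaternionic Jordan canonical form throughout.

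Next I would establish the four forward implications (dynamical type $\Rightarrow$ stated structure of the complement). For the elliptic case this is direct: in Jordan form $\gamma = \mathrm{diag}(\lambda_1,\dots,\lambda_{n+1})$ with $|\lambda_i|=1$, so every power $\gamma^k$ lies in the compact group $\mathrm{Sp}(n+1)$ of quaternionic unitary matrices; the family $\{\gamma^k\}_{k\in\mathbb{Z}}$ is therefore precompact, and every sequence has a subsequence converging uniformly on the compact space $\mathbb{P}^n_{\mathbb{H}}$ to a projective homeomorphism, giving $\mathrm{Eq}(\Gamma)=\mathbb{P}^n_{\mathbb{H}}$. For the remaining three types the forward implications are precisely the explicit computations carried out in Theorem \ref{th:par-main}, Theorem \ref{th:loxodromic}, and Theorem \ref{th:loxoparabolic}, where the equicontinuity region is obtained by analyzing the pseudo-projective limits of the normalized powers $\gamma^k$ as $k\to\pm\infty$: the parabolic case produces a single indeterminacy subspace $L$, whereas the loxodromic and loxoparabolic cases produce two incomparable subspaces $L_1,L_2$ (an attracting and a repelling locus), the former with $\gamma$ diagonalizable and the latter not.

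The reverse implications then follow by a partition-matching argument, which avoids having to reconstruct the Jordan form from the dynamics. The four structural descriptions on the right-hand sides are themselves mutually exclusive: the whole space has empty complement; a single proper subspace is not the union of two incomparable proper subspaces; and the diagonalizability clause separates (iii) from (iv). Given $\gamma$ with, say, complement equal to a single subspace, Definition \ref{trichotomy} assigns $\gamma$ to exactly one of the four classes; the corresponding forward implication forces the complement into the matching structural class, and mutual exclusivity of those classes forces that class to be parabolic. The same reasoning disposes of (i), (iii), and (iv) simultaneously. Here one must carry over one genuine geometric point from the forward direction, namely that in the loxodromic and loxoparabolic cases the two subspaces $L_1,L_2$ are genuinely incomparable, so that their union is not itself a projective subspace and cannot be confused with the parabolic conclusion.

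I expect the substantive difficulty to lie entirely in the forward implications for the non-semisimple types, i.e.\ in the cited theorems rather than in the assembly above. The crux is the explicit evaluation of the pseudo-projective limits when $\gamma$ has a nontrivial nilpotent part: one normalizes $\gamma^k$ by its dominant entry, and for a Jordan block whose eigenvalue class has modulus $\rho$ and size $m+1$ this dominant growth is of order $|\rho|^{k}k^{m}$, so identifying which coordinate directions survive in the limit---and hence the precise indeterminacy subspace---requires tracking both the modular and the polynomial contributions. The non-commutativity of $\mathbb{H}$ enters here, since eigenvalues are only well defined up to similarity class and the block entries are built from noncommutative products; verifying that the limiting locus is a genuine quaternionic projective subspace, and that equicontinuity does hold on its complement, is the technical heart of the argument.
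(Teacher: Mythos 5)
Your proposal is correct and takes essentially the same route as the paper: the forward implications are exactly the per-type computations of Sections \ref{sec:elliptic}--\ref{sec:loxo} (Theorems \ref{th:elliptic}, \ref{th:par-main}, \ref{th:loxodromic}, \ref{th:loxoparabolic}, after the same implicit reduction to Jordan form by conjugation equivariance), and the converses follow from exhaustiveness of the four dynamical classes together with mutual exclusivity of the four structural descriptions of the complement, including the incomparability of $L_1$ and $L_2$ that you correctly flag. Your only deviation is the elliptic forward direction, where you invoke precompactness of the powers in $\mathrm{Sp}(n+1)$ rather than the paper's rational/irrational case split combined with Lemma \ref{lemma:ker}; both arguments are sound.
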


Our results indicate that the description of the \emph{dynamical type} of the generator $g$ is classified by the equicontinuity region of $g$, and this provides an analytic classification of the elliptic, parabolic, loxodromic and loxoparabolic elements in ${\rm PSL}(n, \H)$.  It may be noted that there has been attempts to relate the classification of transformations to their algebraic data, for example, see \cite{dgl2}, \cite{kl}, \cite{go}, 
\cite{ps}. Specifically, we establish that elliptic cyclic groups act equicontinuously on the entire quaternionic projective space, whereas in the parabolic and loxodromic cases, the equicontinuity region is described by explicit quaternionic projective subspaces determined by the structure of the Jordan blocks associated with the generator. We hope that this work (along with \cite{dgm}) lays the foundation for further investigations aimed at understanding the global dynamics of quaternionic projective groups and their invariants.

		\subsection{Structure of the paper:} Following an introduction in the first section, we present the necessary preliminaries in Section \ref{sec:prelim}. Section \ref{sec:elliptic} details the equicontinuity region for elliptic elements of 
    $\mathrm{PSL}(n,\H)$, while Section \ref{sec:parabolic} analyzes parabolic elements. Finally, we compute the equicontinuity regions for loxodromic and loxoparabolic elements in Sections \ref{sec:loxodromic} and \ref{sec:loxo}, respectively.
    	\subsection{Notations}\label{notations}
    	\begin{enumerate}[(i)]
		\item  $\mathrm{D}(\lambda_1,\lambda_2,\ldots,\lambda_n)$ denotes a $n\times n$ diagonal matrix whose diagonal entries are $\lambda_1,\lambda_2,$ $\ldots,\lambda_n$.
		If $\lambda_1=\lambda_2=\ldots=\lambda_n$ then we simply denote as $\mathrm{D}(\lambda,n)$.
		\item $ \mathrm{J}(\lambda,n+1)$ denotes a $(n+1)\times (n+1)$ matrix which have $\lambda$ at the diagonal and 1 at the super diagonal.
		\par Note that, 
		\begin{equation*}
			\mathrm{J}(\lambda,n+1)^m=\begin{bsmallmatrix}
				\lambda^m & {m \choose 1}\lambda^{m-1} & {m \choose 2}\lambda^{m-2} & \ldots & {m \choose n}\lambda^{m-n}\\&  \lambda^m & {m \choose 1}\lambda^{m-1} & \ldots & {m \choose n-1}\lambda^{m-n+1}\\ &   & \ddots & \ddots & \vdots\\
				&   & &\ddots & {m \choose 1}\lambda^{m-1}  \\ & &   &  & \lambda^m 
			\end{bsmallmatrix}=\left[ {m\choose j-i}\lambda^{m-j+1}\right]_{1\leq i,j\leq n+1}.
		\end{equation*}
        Also
        \begin{equation*}
\mathrm{J}(\lambda,n+1)^{-m}
=
\begin{bsmallmatrix}
\lambda^{-m} & -\binom{m}{1}\lambda^{-m-1} & \binom{m+1}{2}\lambda^{-m-2} & \cdots & (-1)^n \binom{m+n-1}{n} \lambda^{-m-n} \\
0 & \lambda^{-m} & -\binom{m}{1}\lambda^{-m-1} & \cdots & (-1)^{n-1} \binom{m+n-2}{n-1} \lambda^{-m-(n-1)} \\
0 & 0 & \lambda^{-m} & \ddots & \vdots \\
\vdots & \vdots & \vdots & \ddots & -\binom{m}{1}\lambda^{-m-1} \\
0 & 0 & 0 & 0 & \lambda^{-m}
\end{bsmallmatrix}.
        \end{equation*}
       \textbf{Note.}
For the Jordan block \(\mathrm{J}(\lambda,n+1)\), after normalizing by the maximal
entry in \(J(\lambda,n+1)^{\pm m}\), one has
\[
\frac{\bigl(\mathrm{J}(\lambda,n+1)^{\pm m}\bigr)_{i,j}}
     {\bigl(\mathrm{J}(\lambda,n+1)^{\pm m}\bigr)_{1,n+1}}
\;\longrightarrow\; 0
\qquad \text{as } m\to\infty,
\]
for every \((i,j)\neq (1,n+1)\).\\
In words: for both positive and negative powers of the Jordan block,
every entry becomes negligible compared to the \((1,n+1)\)-entry
as \(m\to\infty\).\\

		\item $\begin{bmatrix}
			\mathrm{D}(\lambda,l) & \\ & \mathrm{J}(\mu,k)
		\end{bmatrix}$ denotes a $(l+k)\times (l+k)$ dimensional block diagonal matrix which have blocks $\mathrm{D}(\lambda,l)$ and $\mathrm{J}(\mu,k)$.
		\item   $\mathrm{E}_{m,n}$ be a $n\times n$ matrix such that all its entries except the $(m,n)^{\text{th}}$ entry is 0 and the $(m,n)^{\text{th}}$ entry is 1.
		\item $L{\{p_1,p_2,\ldots,p_n\}}$ or, $\langle  p_1,p_2,\ldots,p_n \rangle $ denotes a subspce of the projective space $\P^{n}_\H$ generated by $p_1,p_2,\ldots,p_n\in \P^{n}_\H$. Similarly, $\langle U\rangle$ denotes the subspace generated by the set $U$.  
	\end{enumerate}
   \section{Preliminaries} \label{sec:prelim}
  Let $\mathbb{H}$ denote the division ring of Hamilton’s quaternions. Every quaternion $a \in \mathbb{H}$ can be expressed as
\[
a = a_0 + a_1 i + a_2 j + a_3 k, \qquad a_0,a_1,a_2,a_3 \in \mathbb{R},
\]
where the units satisfy $i^2 = j^2 = k^2 = ijk = -1$. The conjugate of $a$ is
\[
\bar{a} = a_0 - a_1 i - a_2 j - a_3 k.
\]
The real subspace $\mathbb{R}\oplus \mathbb{R}i$ is identified with the complex plane $\mathbb{C}$. For further background on quaternionic linear algebra and matrix theory, see \cite{rodman,FZ}.

\subsection*{Eigenvalues of matrices in  $\mathrm{M}(n,\mathbb{H})$}

\begin{definition}\label{def-eigen-M(n,H)-rew}
Let $A \in \mathrm{M}(n,\mathbb{H})$. A non-zero vector $v \in \mathbb{H}^n$ is a (right) eigenvector of $A$ with (right) eigenvalue $\lambda \in \mathbb{H}$ if
\[
A v = v \lambda.
\]
\end{definition}

Right eigenvalues occur in similarity classes: if $A v = v \lambda$, then for any $\mu \in \mathbb{H}^{\times}$, the vector $v\mu$ is also an eigenvector with eigenvalue $\mu^{-1}\lambda \mu$. Each similarity class contains a unique complex number with non-negative imaginary part, which we take as its representative and refer to simply as an eigenvalue.

\subsection*{Complex Embedding and Quaternionic Determinant}

Any $A \in \mathrm{M}(n,\mathbb{H})$ can be written as
\[
A = A_1 + A_2 j, \qquad A_1, A_2 \in \mathrm{M}(n,\mathbb{C}).
\]
The standard embedding
\[
\Phi : \mathrm{M}(n,\mathbb{H}) \longrightarrow \mathrm{M}(2n,\mathbb{C})
\]
is given by
\begin{equation}\label{eq-embed-phi-rew}
\Phi(A) = 
\begin{bmatrix}
A_1 & A_2 \\
- \overline{A_2} & \overline{A_1}
\end{bmatrix}.
\end{equation}

\begin{definition}
For $A \in \mathrm{M}(n,\mathbb{H})$, the quaternionic determinant of $A$ is
\[
{\det}_{\mathbb{H}}(A) := \det(\Phi(A)).
\]
This definition is independent of the embedding $\Phi$ by the Skolem--Noether theorem.
\end{definition}

The corresponding Lie groups are
\[
\mathrm{GL}(n,\mathbb{H}) = \{g \mid {\det}_{\mathbb{H}}(g)\neq 0\}, \qquad
\mathrm{SL}(n,\mathbb{H}) = \{g \mid {\det}_{\mathbb{H}}(g)=1\}.
\]

\subsection*{Jordan Theory over Quaternions}

\begin{definition}{\cite{rodman}}
A Jordan block $\mathrm{J}(\lambda,m)$ is the $m \times m$ matrix having $\lambda$ on its diagonal, $1$ on its superdiagonal, and zeros elsewhere. A direct sum of Jordan blocks is called a Jordan form.
\end{definition}

\begin{lemma}[Quaternionic Jordan form, cf.~{\cite[Th.~5.5.3]{rodman}}]\label{lem-Jordan-M(n,H)-rew}
For every $A \in \mathrm{M}(n,\mathbb{H})$, there exists $S \in \mathrm{GL}(n,\mathbb{H})$ such that
\[
S A S^{-1} = \mathrm{J}(\lambda_1,m_1) \oplus \cdots \oplus \mathrm{J}(\lambda_k,m_k),
\]
where each $\lambda_i$ is a complex number with non-negative imaginary part. This decomposition is unique up to permutation of blocks.
\end{lemma}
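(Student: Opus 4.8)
The plan is to prove the quaternionic Jordan form by reducing to the complex case through the embedding $\Phi$, and then descending back to $\mathbb{H}$. The essential tool is the complex Jordan canonical form for $\Phi(A)\in\mathrm{M}(2n,\mathbb{C})$ together with the symmetry of $\Phi(A)$ under the conjugation $X\mapsto J\overline{X}J^{-1}$, where $J=\begin{bsmallmatrix}0 & I_n\\ -I_n & 0\end{bsmallmatrix}$. First I would record that the image $\Phi(\mathrm{M}(n,\mathbb{H}))$ consists precisely of those $2n\times 2n$ complex matrices commuting with the antilinear structure $\mathbf{j}\colon \mathbb{C}^{2n}\to\mathbb{C}^{2n}$ defined by $\mathbf{j}(v)=J\overline{v}$. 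Since $\mathbf{j}^2=-I$, this makes $\mathbb{C}^{2n}$ into a right $\mathbb{H}$-module of dimension $n$, and $\Phi$ is an $\mathbb{R}$-algebra isomorphism onto its image. Under this correspondence, an $\mathbb{H}$-linear subspace of $\mathbb{H}^n$ corresponds exactly to a $\mathbf{j}$-invariant complex subspace of $\mathbb{C}^{2n}$, and an $\mathbb{H}$-basis corresponds to a complex basis of the form $\{w_1,\dots,w_n,\mathbf{j}w_1,\dots,\mathbf{j}w_n\}$.

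For existence, I would apply the complex Jordan theorem to $\Phi(A)$, but then exploit the $\mathbf{j}$-symmetry to pair up the complex Jordan structure. The key observation is that if $\lambda$ is a complex eigenvalue of $\Phi(A)$ with a Jordan block of size $m$, then applying $\mathbf{j}$ produces a Jordan block of the same size $m$ for the eigenvalue $\bar\lambda$; thus the complex Jordan blocks come in conjugate pairs $(\lambda,m)$ and $(\bar\lambda,m)$, with real eigenvalues occurring an even number of times. The strategy is then to build, for each such conjugate pair, an $\mathbf{j}$-invariant root subspace spanned by a complex Jordan chain together with its image under $\mathbf{j}$; normalizing the representative $\lambda$ to have non-negative imaginary part, I would construct from each pair a single quaternionic Jordan chain realizing the block $\mathrm{J}(\lambda,m)$. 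Collecting these chains gives the similarity matrix $S\in\mathrm{GL}(n,\mathbb{H})$ and the asserted direct-sum decomposition.

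For uniqueness up to permutation, I would argue that the multiset of quaternionic blocks $(\lambda_i,m_i)$ is determined by conjugation-invariant data of $A$, namely the ranks of $(A-\text{(block with eigenvalue class of }\lambda))^{r}$ read off through $\Phi$. Concretely, since $\Phi$ carries the quaternionic block $\mathrm{J}(\lambda,m)$ to two complex blocks $\mathrm{J}(\lambda,m)\oplus \mathrm{J}(\bar\lambda,m)$, the complex Jordan data of $\Phi(A)$ is exactly twice the quaternionic data (with conjugate pairing), and the complex Jordan form is unique up to permutation; hence the quaternionic blocks are recovered uniquely after selecting the representative with non-negative imaginary part in each similarity class. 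This translates the well-known uniqueness over $\mathbb{C}$ directly into uniqueness over $\mathbb{H}$.

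I expect the main obstacle to be the descent step: producing a genuinely $\mathbb{H}$-linear Jordan chain from the paired complex chains, i.e.\ showing that the $\mathbf{j}$-invariant root subspace admits a cyclic vector over $\mathbb{H}$ giving precisely the block $\mathrm{J}(\lambda,m)$ rather than some other $\mathbb{H}$-indecomposable form. The subtlety is non-commutativity: the naive complex chain $\{v,(A-\lambda)v,\dots\}$ need not be $\mathbf{j}$-stable, so I would have to either choose the chain compatibly with $\mathbf{j}$ or correct it, and then verify that right-multiplication by the quaternion representing $\lambda$ reproduces the superdiagonal structure after the normalization of the eigenvalue class. Handling the real-eigenvalue case (where $\lambda=\bar\lambda$ and the two complex blocks are not obviously distinct) requires separate care to ensure the resulting quaternionic block still has the correct size. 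Since the statement is quoted as \cite[Th.~5.5.3]{rodman}, I would be entitled to invoke that reference directly, but the sketch above records the proof that underlies it.
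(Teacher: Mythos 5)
The paper contains no proof of this lemma to compare against: it is imported verbatim from Rodman~\cite[Th.~5.5.3]{rodman}, and the authors use it as a black box. Your sketch reconstructs the standard argument underlying that reference, and it is sound in outline: the image of $\Phi$ is exactly the commutant of the antilinear map $\mathbf{j}(v)=J\overline{v}$ with $\mathbf{j}^2=-I$; since $\mathbf{j}$ commutes with $\Phi(A)$ and intertwines $(\Phi(A)-\lambda)$-chains with $(\Phi(A)-\bar{\lambda})$-chains, the complex Jordan blocks of $\Phi(A)$ pair off between $\lambda$ and $\bar{\lambda}$ with equal sizes; and right multiplication of a quaternionic chain by $j$ converts eigenvalue $\lambda$ into $\bar{\lambda}$ (because $\lambda j=j\bar{\lambda}$), which is all that the normalization $\mathrm{Im}\,\lambda\ge 0$ requires. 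Your uniqueness argument is also correct as stated, since $\Phi(\mathrm{J}(\lambda,m))$ is literally $\mathrm{J}(\lambda,m)\oplus\mathrm{J}(\bar{\lambda},m)$, similarity over $\mathbb{H}$ implies similarity of the $\Phi$-images over $\mathbb{C}$, and the doubling-with-conjugate-pairing map on block multisets is injective once representatives are normalized (for real $\lambda$ the quaternionic multiplicity is half the complex one). The step you flag but leave open --- $\mathbf{j}$-stability of the chosen chain --- is indeed the only delicate point, and it splits cleanly: for non-real $\lambda$ the condition $W\cap\mathbf{j}W=0$ is automatic, since $\mathbf{j}W$ lies in the root space of $\bar{\lambda}\neq\lambda$; for real $\lambda$ the root subspace is itself $\mathbf{j}$-invariant, hence an $\mathbb{H}$-subspace on which $A-\lambda I$ is an $\mathbb{H}$-linear nilpotent, and the cyclic (Jordan) decomposition for nilpotent operators is valid over any division ring, so no pairing of complex chains is needed there at all --- this also yields, rather than presupposes, the even complex multiplicities you assert. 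With that point discharged your sketch is a complete and correct proof; since the paper itself only cites the result, your reconstruction goes strictly beyond what the paper provides.
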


\subsection*{Quaternionic Projective Space $\mathbb{P}^n_{\mathbb{H}}$}

Consider $\mathbb{H}^{n+1}$ as a right $\mathbb{H}$-vector space. The quaternionic projective space is defined by
\[
\mathbb{P}^n_{\mathbb{H}} = \big( \mathbb{H}^{n+1}\setminus\{0\} \big) / \sim,
\qquad
z \sim w \iff z = w \alpha \ \text{for some }\alpha \in \mathbb{H}^\times.
\]
Let $\mathbb{P}$ denote the quotient map. A subset $W\subseteq \mathbb{P}^n_{\mathbb{H}}$ is a $k$-dimensional projective subspace if there exists a $(k+1)$-dimensional $\mathbb{H}$-linear subspace $\widetilde{W}\subseteq \mathbb{H}^{n+1}$ such that
\[
\mathbb{P}(\widetilde{W}\setminus\{0\}) = W.
\]
Projective $1$-subspaces are called \emph{lines}. For
\[
p=(x_1,\dots,x_{n+1}) \in \mathbb{H}^{n+1},
\quad
[x_1:\cdots:x_{n+1}] := \mathbb{P}(p),
\]
and for $\alpha \neq 0$,
\[
[x_1:\cdots:x_{n+1}] = [x_1\alpha : \cdots : x_{n+1}\alpha].
\]

Given $S \subseteq \mathbb{P}^n_{\mathbb{H}}$,
\[
\langle S\rangle = \bigcap \{ H : H \ \text{is a projective subspace containing } S\}.
\]
If $p_1,\dots,p_m$ are distinct, then $\langle p_1,\dots,p_m\rangle$ is the unique projective subspace containing them; for $m=2$ it is the quaternionic projective line $L\{p,q\}$ (cf.~\cite{dgl}).

\subsection*{Projective Transformations}

For $\gamma \in \mathrm{GL}(n+1,\mathbb{H})$,
\[
\gamma(\mathbb{P}(z)) = \mathbb{P}(\gamma z), \qquad z\neq 0.
\]
Scalar multiples act trivially, so
\[
\mathrm{PSL}(n+1,\mathbb{H})
= \mathrm{SL}(n+1,\mathbb{H}) / \mathcal{Z}(\mathrm{SL}(n+1,\mathbb{H})),
\qquad
\mathcal{Z} = \{\pm I_{n+1}\}.
\]
Each element has exactly two lifts $\widetilde{\gamma}$ and $-\widetilde{\gamma}$. Using quaternionic Jordan theory, one obtains a well-defined classification of elements as elliptic, loxodromic, or parabolic (see Definition~\ref{def:classification}).

\subsection*{Pseudo-Projective Transformations}

Following Cano--Seade \cite{can2}, we extend the notion of pseudo-projective transformations to the quaternionic setting. Let $\widetilde{M}:\mathbb{H}^{n+1}\to\mathbb{H}^{n+1}$ be a non-zero $\mathbb{H}$-linear map. It induces
\[
M : \mathbb{P}^n_{\mathbb{H}} \setminus \ker(M) \longrightarrow \mathbb{P}^n_{\mathbb{H}},
\qquad
M(\mathbb{P}(v)) = \mathbb{P}(\widetilde{M}(v)),
\]
where $\ker(M)$ is the projectivization of $\ker(\widetilde{M})\setminus\{0\}$. We call $M$ a \emph{pseudo-projective transformation} and set
\[
\mathrm{QP}(n+1,\mathbb{H})
= \{ M \mid \widetilde{M} \ \text{is a non-zero }\mathbb{H}\text{-linear transformation}\}.
\]
The group $\mathrm{PSL}(n+1,\mathbb{H})$ embeds naturally into $\mathrm{QP}(n+1,\mathbb{H})$.

Pseudo-projective transformations, together with the classification of lifts in $\mathrm{PSL}(n+1,\mathbb{H})$, will be used to study of equicontinuty region .

   \begin{definition}\label{def_limit}
       Let $G$ be a subgroup of $\mathrm{PSL}(n,\H)$. Then an element $\gamma\in \mathrm{QP}(n+1,\H)$ is called a limit of the group $G$ if there exists a sequence of distinct elements $(\gamma_m)_{m\in \N}\subset G$ such that $\gamma_m\xrightarrow{m\rightarrow\infty} \gamma$.
   \end{definition}
   If the Definition \ref{def_limit} holds, then the set of all the limits of $G$ is denoted by $Lim(G).$
   
   \subsection{Some useful results}
     \begin{lemma}\cite[Lemma~2.9]{dgm}
			\label{lem:p1}
			$(g_n)_{n\in \N}\subset \mathrm{PSL}(n+1,\H)$ be a sequence, then there exists a sub sequence $(g_{n_k})_{k\in \N}\subset \mathrm{PSL}(n+1,\H)$ and an element $g\in \mathrm{QP}(n+1,\H)$ such that $g_{n_k}\rightarrow g$ uniformly on the compact subsets of $\P^n_\H\setminus \ker(g)$.
		\end{lemma}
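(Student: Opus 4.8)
The plan is to lift the sequence to $\mathrm{SL}(n+1,\H)$, renormalize so that the lifts lie in a compact set, and extract a subsequential limit which will be a (possibly singular) non-zero $\H$-linear map whose induced pseudo-projective transformation is the desired $g$. Concretely, for each $n$ choose a lift $\widetilde{g}_n \in \mathrm{SL}(n+1,\H)$ of $g_n$ and fix any norm $\|\cdot\|$ on $\mathrm{M}(n+1,\H)$, viewed as the real vector space $\R^{4(n+1)^2}$ (for instance the Frobenius norm $\|A\| = \sqrt{\mathrm{tr}(\Phi(A)^{*}\Phi(A))}$ transported through the complex embedding $\Phi$). Set $\hat{g}_n = \widetilde{g}_n / \|\widetilde{g}_n\|$. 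Since the scaling factor is a positive real number, which is central in $\H$, right multiplication by it does not change projective classes; thus $\mathbb{P}(\hat{g}_n v) = \mathbb{P}(\widetilde{g}_n v) = g_n(\mathbb{P}(v))$ for every $v \neq 0$, so $\hat{g}_n$ induces exactly the projective map $g_n$.

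By construction $\|\hat{g}_n\| = 1$ for all $n$, so the normalized lifts lie on the unit sphere of $\mathrm{M}(n+1,\H)$, which is compact. Hence some subsequence converges, $\hat{g}_{n_k} \to \hat{g}$, with $\|\hat{g}\| = 1$; in particular $\hat{g} \neq 0$. As an entrywise limit of quaternionic matrices, $\hat{g}$ is again a (right) $\H$-linear map, so it defines an element $g \in \mathrm{QP}(n+1,\H)$ with $\ker(g) = \mathbb{P}(\ker(\hat{g}) \setminus \{0\})$, a proper projective subspace since $\hat{g}\neq 0$.

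For the uniform convergence, let $K \subset \mathbb{P}^n_{\mathbb{H}} \setminus \ker(g)$ be compact. Represent the points of $K$ by unit vectors; by compactness of $K$ and continuity of $\hat{g}$ there is $\delta > 0$ with $\|\hat{g}\,v\| \geq \delta$ for every such representative $v$. Because $\hat{g}_{n_k} \to \hat{g}$ in norm, for all large $k$ we have $\delta/2 \leq \|\hat{g}_{n_k} v\| \leq C$ uniformly over these representatives, for a fixed constant $C$. The projectivization map $w \mapsto \mathbb{P}(w)$ is uniformly continuous on the compact annular region $\{\, w : \delta/2 \leq \|w\| \leq C \,\}$, and $\hat{g}_{n_k} v \to \hat{g}\,v$ uniformly on $K$; combining these two facts gives $g_{n_k} \to g$ uniformly on $K$, as required.

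The main obstacle is the last step: one must ensure that the uniform convergence of the linear maps on the sphere passes to uniform convergence of the induced maps on $\mathbb{P}^n_{\mathbb{H}}$, which requires staying uniformly bounded away from the kernel so that projectivization remains Lipschitz on the relevant region. The extraction of the limit via compactness of the unit sphere is routine, but the identification of $\ker(g)$ and the uniform bound $\|\hat{g}\,v\| \geq \delta$ on $K$ are where the argument must be handled with care. The non-commutativity of $\H$ plays essentially no role beyond fixing the convention that projective equivalence is by right scalars and that the normalizing factor is real (hence central); the argument otherwise parallels the complex case of Cano--Seade.
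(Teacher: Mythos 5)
Your proposal is correct, and it is essentially the canonical argument for this statement: the paper itself gives no proof here (it imports the lemma from \cite[Lemma~2.9]{dgm}, which in turn follows the Cano--Seade normalization-and-compactness scheme), and your normalization of the lifts to the unit sphere of $\mathrm{M}(n+1,\H)$, extraction of a subsequential limit, and the uniform lower bound $\|\hat{g}\,v\|\geq\delta$ on unit representatives of $K$ are exactly the steps that proof uses. The only point worth making explicit is that the set of unit-vector representatives of a compact $K\subset\P^n_\H\setminus\ker(g)$ is itself compact (as a closed subset of the unit sphere of $\H^{n+1}$), which is what justifies the existence of $\delta>0$; with that noted, the argument is complete.
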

    \begin{lemma}
  Let $(\gamma_m)_{m\in \N}\subset \mathrm{PSL}(n+1,\H)$ be a sequence converging to $\gamma\in \mathrm{QP}(n+1,\H)\setminus \mathrm{PSL}(n+1,\H)$. Also assume $L\subset \P^n_\H$ be a projective subspace such that $L\cap \ker(\gamma)=\{x\}$ and $\dim(L)\geq \dim(\mathrm{Im}(\gamma))+1$. Then there is a subsequence $(\gamma_{m_k})_{k\in\N}\subset (\gamma_m)_{m\in\N}$ such that $\gamma_{m_k}(x_m)\xrightarrow{k\rightarrow\infty}y$, where $x_m\rightarrow x$ and $y\in M$, $M\subset \P^n_\H$ is a subset with $\dim(M)=\dim(L)$ and $M\cap \mathrm{Im}(\gamma)$ is non-empty.
   \end{lemma}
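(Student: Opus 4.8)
The plan is to sidestep the degeneration of $\gamma_m$ as a map by instead tracking the \emph{images of the whole subspace} $L$. Although $\gamma_m\to\gamma$ collapses dimension in the limit (so that the pointwise image $\gamma(L\setminus\{x\})$ is a projective subspace of dimension $\dim(L)-1$), each $\gamma_m$ is a genuine element of $\mathrm{PSL}(n+1,\H)$ and hence a projective isomorphism, so every $\gamma_m(L)$ is a projective subspace of dimension exactly $\dim(L)$. The set $M$ will be produced as a subsequential limit of the subspaces $\gamma_m(L)$ in the compact quaternionic Grassmannian, and the extra dimension that $M$ carries over $\gamma(L\setminus\{x\})$ records precisely the blow-up direction along which the kernel point $x$ is smeared out.

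Concretely, I would proceed in three steps. \emph{First}, writing $\ell=\dim(L)$ and lifting $L$ to the $(\ell+1)$-dimensional right $\H$-subspace $\widetilde L\subset\H^{n+1}$, I note that $\gamma_m(\widetilde L)$ is again $(\ell+1)$-dimensional for every $m$. Since the Grassmannian of $(\ell+1)$-planes in $\H^{n+1}$ is compact, I extract a subsequence $(\gamma_{m_k})$ for which $\gamma_{m_k}(\widetilde L)$ converges to an $(\ell+1)$-dimensional subspace $\widetilde M$; set $M=\P(\widetilde M)$, so $\dim(M)=\ell=\dim(L)$ automatically, with no collapse. \emph{Second}, given $x_m\to x$ inside $L$, the images $\gamma_{m_k}(x_{m_k})$ all lie on $\gamma_{m_k}(L)$; passing to a further subsequence (using compactness of $\P^n_\H$) I may assume $\gamma_{m_k}(x_{m_k})\to y$, and the closedness of the incidence relation $\{(p,W):p\in W\}$ together with $\gamma_{m_k}(L)\to M$ forces $y\in M$. \emph{Third}, to see $M\cap\mathrm{Im}(\gamma)\neq\varnothing$ I invoke \lemref{lem:p1}: since $\gamma_m\to\gamma$ uniformly on compact subsets of $\P^n_\H\setminus\ker(\gamma)$ and $L\setminus\{x\}=L\cap(\P^n_\H\setminus\ker(\gamma))$ is nonempty, any fixed $p\in L\setminus\{x\}$ satisfies $\gamma_{m_k}(p)\to\gamma(p)\in\mathrm{Im}(\gamma)$ with $\gamma_{m_k}(p)\in\gamma_{m_k}(L)$, whence $\gamma(p)\in M\cap\mathrm{Im}(\gamma)$ by the same incidence-closedness.

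The role of the hypotheses is then transparent: the condition $L\cap\ker(\gamma)=\{x\}$ localizes the degeneration at the single point $x$, so that $\gamma(L\setminus\{x\})$ drops dimension by exactly one and $M$ exceeds it by exactly one; and the inequality $\dim(L)\ge\dim(\mathrm{Im}(\gamma))+1\ge 1$ both guarantees $L\setminus\{x\}\neq\varnothing$ and, via $\dim(\widetilde M)=\ell+1>\dim\mathrm{Im}(\widetilde\gamma)$, ensures $M$ is strictly larger than $\mathrm{Im}(\gamma)$ so that genuine blow-up occurs. The main obstacle I anticipate is not conceptual but a matter of making the two limiting processes compatible and rigorous over $\H$: one must justify convergence in the quaternionic Grassmannian and the closedness of the point--subspace incidence for right $\H$-subspaces (so that limits of points lying on converging planes remain on the limit plane), and then arrange a single subsequence along which the Grassmannian limit, the pointwise convergence on $L\setminus\{x\}$ from \lemref{lem:p1}, and the convergence of $\gamma_{m_k}(x_{m_k})$ all hold simultaneously. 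Non-commutativity enters only through the right $\H$-linear structure and does not affect these compactness and closedness facts, which is why the complex-case strategy transfers.
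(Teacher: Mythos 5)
Your proposal is correct and follows essentially the same route as the paper: extract a convergent subsequence of the subspaces $\gamma_{m}(L)$ in the compact quaternionic Grassmannian to obtain $M$ with $\dim(M)=\dim(L)$, and then use the uniform convergence $\gamma_{m_k}\to\gamma$ away from $\ker(\gamma)$ at a point of $L\setminus\{x\}$ to conclude $M\cap\mathrm{Im}(\gamma)\neq\varnothing$. If anything, your write-up is more explicit than the paper's (which leaves the construction of $y$ and the incidence-closedness step implicit), but the underlying argument is the same.
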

   \begin{proof}
       Let $L\in Gr_\H(k,n)$. As $Gr_\H(k,n)$ is compact hence there exists a subsequence $(\gamma_{n_k})$ and $M\in Gr_\H(k,n)$ such that $\gamma_{n_k}(L)\xrightarrow{k\rightarrow \infty}M$.
       \par Assume $p\in M\setminus \mathrm{Im}(\gamma)$. Then there exists sequence $(p_m)\subset L$ such that $\gamma_{n_k}(p_k)\xrightarrow{k\rightarrow \infty} p$. Now limit point of $(p_m)$ lies in $L$ as well as in $\ker(\gamma)$ hence it is $x$.
       \par From the facts that $\gamma_{n_k}\rightarrow \gamma$ and $L\cap \ker(\gamma)=\{x\}$ we have $M\cap \mathrm{Im}(\gamma)\neq \emptyset.$
   \end{proof}
   	
   \begin{lemma}
\label{lemma:ker}
Let $(\gamma_n) \subset \mathrm{PSL}(n, \mathbb{H})$ be a sequence that converges to a pseudo-projective transformation $\gamma \in \mathrm{QP}(n, \mathbb{H})$ uniformly on compact subsets of $\mathbb{P}^n_{\mathbb{H}} \setminus \ker(\gamma)$. Then the equicontinuity region of the sequence $\{\gamma_n\}$ is given by
\[
\mathrm{Eq}(\{\gamma_m : m \in \mathbb{N}\}) = \mathbb{P}^n_{\mathbb{H}} \setminus \ker(\gamma).
\]
\end{lemma}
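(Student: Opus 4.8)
The equality is proved by the two inclusions, and throughout I may assume $\gamma \in \mathrm{QP}(n+1,\H)\setminus\mathrm{PSL}(n+1,\H)$, since otherwise $\ker(\gamma)=\emptyset$, the convergence $\gamma_m\to\gamma$ is uniform on all of the compact space $\P^n_\H$, and $\mathrm{Eq}=\P^n_\H$ trivially. I record that $\ker(\gamma)=\P(\ker\widetilde\gamma)$ is a proper projective subspace, hence closed and nowhere dense, so $\P^n_\H\setminus\ker(\gamma)$ is open and dense. Since the target $\P^n_\H$ is a compact metric space, by Arzelà--Ascoli a restricted family $G_U$ is normal (Definition \ref{eqcnt}) if and only if it is equicontinuous on $U$; I will use normality and equicontinuity interchangeably.

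\textbf{The inclusion $\P^n_\H\setminus\ker(\gamma)\subseteq\mathrm{Eq}(\{\gamma_m\})$.} Given $z\notin\ker(\gamma)$, I would choose an open neighborhood $U\ni z$ whose closure $\overline U$ is a compact subset of $\P^n_\H\setminus\ker(\gamma)$, which is possible as the latter set is open. By the hypothesis (equivalently by Lemma \ref{lem:p1}), $\gamma_m\to\gamma$ uniformly on $\overline U$, so the whole sequence $\gamma_m|_U$ converges uniformly on compact subsets of $U$ to $\gamma|_U$. Every sequence drawn from $G_U$ is then either eventually constant or has indices tending to infinity, and in both cases admits a uniformly (on compacts) convergent subsequence; hence $G_U$ is normal and $z\in\mathrm{Eq}(\{\gamma_m\})$.

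\textbf{The inclusion $\mathrm{Eq}(\{\gamma_m\})\subseteq\P^n_\H\setminus\ker(\gamma)$.} This is the substantial direction; I argue that no $z_0\in\ker(\gamma)$ is equicontinuous. Suppose for contradiction that $z_0\in\ker(\gamma)\cap\mathrm{Eq}$, pick a neighborhood $U$ with $G_U$ normal, and extract a subsequence $\gamma_{m_k}|_U\to h$ uniformly on compacts of $U$ with $h$ continuous. On the dense open set $U\setminus\ker(\gamma)$ we have $\gamma_{m_k}\to\gamma$, so $h=\gamma$ there, and by continuity $h(U)\subseteq\overline{\mathrm{Im}(\gamma)}=\mathrm{Im}(\gamma)$. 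The main mechanism is directional: writing $z_0=\P(v)$ with $v\in\ker\widetilde\gamma$, for any direction $w$ with $\widetilde\gamma w\neq 0$ the line $t\mapsto\P(v+wt)$ approaches $z_0$ as $t\to 0$, while $\widetilde\gamma(v+wt)=\widetilde\gamma(w)\,t$ gives $\gamma\bigl(\P(v+wt)\bigr)=\P(\widetilde\gamma w)$, \emph{constant} along the line. Thus in every neighborhood of $z_0$ I can place two points with images $\P(\widetilde\gamma w_1)$ and $\P(\widetilde\gamma w_2)$; if these differ, a standard equicontinuity estimate (for large $k$, $\gamma_{m_k}$ separates these two nearby points by a fixed distance) contradicts equicontinuity at $z_0$. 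Such a pair exists precisely when $\mathrm{Im}(\gamma)$ contains two points, i.e. when $\mathrm{rank}\,\widetilde\gamma\geq 2$.

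\textbf{Main obstacle.} The genuinely delicate case is $\mathrm{rank}\,\widetilde\gamma=1$, where $\mathrm{Im}(\gamma)$ is a single point $p$, all directional limits coincide, and the argument above is vacuous. Here I would invoke that each $\gamma_m$ is a \emph{homeomorphism} and apply Lemma \ref{lem:p1} to the inverse sequence $(\gamma_{m_k}^{-1})$ to obtain a further subsequence converging to a pseudo-projective $\delta\in\mathrm{QP}(n+1,\H)$. Passing to limits in $\gamma_m\circ\gamma_m^{-1}=\mathrm{id}$ on suitable open dense sets, and using that neither $\gamma$ nor $\delta$ is invertible, yields the duality $\mathrm{Im}(\gamma)\subseteq\ker(\delta)$ and $\mathrm{Im}(\delta)\subseteq\ker(\gamma)$. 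Choosing $y\notin\ker(\delta)$ (so $y\neq p$) and setting $\zeta_k:=\gamma_{m_k}^{-1}(y)$ produces points with $\zeta_k\to\delta(y)\in\ker(\gamma)$ whose images are pinned at $y\neq p$, while neighboring points off the kernel map toward $p$; this separation again defeats equicontinuity. The crux — and the hardest point to get right — is the \emph{localization}: ensuring the witness points accumulate at the \emph{given} $z_0$ rather than at some other point of $\ker(\gamma)$. I expect to resolve this by a dichotomy: either $\gamma_{m_k}(z_0)\not\to p$, in which case comparison with nearby points mapping to $p$ gives the separation directly, or $\gamma_{m_k}(z_0)\to p$, in which case $z_0$ is reached as $\delta(y)$ for a suitably adapted subsequence and auxiliary point $y$. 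Verifying that this dichotomy covers all of $\ker(\gamma)$ is where the quaternionic Jordan structure of the generator (and the explicit normalized powers in the ``Note'' preceding the preliminaries) would be brought to bear.
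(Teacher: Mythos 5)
Your first inclusion and your rank~$\geq 2$ argument are fine (the latter is even a little cleaner than what the paper does: once a normal subsequence converges to a continuous $h$ agreeing with $\gamma$ off the kernel, continuity of $h$ at $z_0$ already forces all the directional limits $\P(\widetilde\gamma w)$ to coincide, no separation estimate needed). The genuine gap is exactly where you flag it: the rank-one case, and specifically the localization. Your inverse-sequence argument produces witness points $\zeta_k=\gamma_{m_k}^{-1}(y)\to\delta(y)$, but $\delta(y)$ ranges only over $\mathrm{Im}(\delta)$, which can be a single point inside the (typically positive-dimensional) hyperplane $\ker(\gamma)$ --- e.g.\ for $\gamma_m=\mathrm{D}(\lambda^m,1,\lambda^{-m})$ one gets $\mathrm{Im}(\delta)=\{e_3\}$ while $\ker(\gamma)=L\{e_2,e_3\}$. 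Your proposed dichotomy does not close this: the first branch ($\gamma_{m_k}(z_0)\not\to p$) is sound, but the second branch asserts without justification that $z_0$ ``is reached as $\delta(y)$ for a suitably adapted subsequence,'' and your stated plan to verify this using ``the quaternionic Jordan structure of the generator'' is unavailable here --- the lemma is about an \emph{arbitrary} sequence in $\mathrm{PSL}(n,\H)$, not the powers of a single element, so there is no generator and no Jordan form to invoke.

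The paper closes this case by a different, rank-uniform device that you should adopt: take a projective subspace $L$ through $z_0$ with $L\cap\ker(\gamma)=\{z_0\}$ and $\dim L>\dim\mathrm{Im}(\gamma)$, use compactness of the Grassmannian to extract $\gamma_{m_j}(L)\to M$ with $\dim M=\dim L$, and observe that $M\not\subset\mathrm{Im}(\gamma)$ for dimension reasons while every point of $M\setminus\mathrm{Im}(\gamma)$ must arise as $\lim\gamma_{m_j}(\xi_j)$ with $\xi_j\to z_0$ (any other accumulation point of $(\xi_j)$ in $L$ lies off $\ker(\gamma)$ and would map into $\mathrm{Im}(\gamma)$). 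Since $M\setminus\mathrm{Im}(\gamma)$ contains at least two points, normality at $z_0$ would force two distinct limits to equal $h(z_0)$, a contradiction. In the rank-one case this is just: a line through $z_0$ transverse to $\ker(\gamma)$ has image converging to a line through $p$, and that limit line supplies the second image point near $z_0$ that your directional argument could not produce. Replacing your dichotomy with this Grassmannian step would complete the proof.
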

\begin{proof} Let $\widetilde{\gamma}:\mathbb{H}^{n+1}\to\mathbb{H}^{n+1}$ be a nonzero linear
representative of $\gamma$, and denote again by $\gamma$ its projectivization
$\P^{n}_{\mathbb{H}}\setminus\ker(\gamma)\to \P^{n}_{\mathbb{H}}$.  
By assumption $\gamma_m\to\gamma$ uniformly on compact subsets of
$\P^{n}_{\mathbb{H}}\setminus \ker(\gamma)$.

\medskip 

Fix $x\notin\ker(\gamma)$.  
Choose a neighborhood $U$ of $x$ such that $\overline U$ is compact and
$\overline U\subset \P^{n}_{\mathbb{H}}\setminus\ker(\gamma)$.
Since $(\gamma_m)$ converges uniformly to $\gamma$ on $\overline U$, the family
$\{\gamma_m\big|_{U}\}$ is normal. Hence $x\in\mathrm{Eq}(\{\gamma_m\})$.

\medskip If possible,  suppose, 
$x\in\ker(\gamma)\cap\mathrm{Eq}(\{\gamma_m\})$. We want to show that this leads to a contradiction. 

Let $k=\dim(\mathrm{Im}(\gamma))$ and choose a projective $k$–subspace
$A\subset \P^{n}_{\mathbb{H}}$ such that
$A\cap\ker(\gamma)\neq\varnothing$ and $A\cap\mathrm{Im}(\gamma)\neq\varnothing$.
Let $L=\langle A,x\rangle$; then $\dim L=k$.

The Grassmannian of projective $k$–subspaces of $\P^{n}_{\mathbb{H}}$ is compact.
Therefore, from the sequence $\gamma_m(L)$ we may extract a subsequence
$\gamma_{n_j}(L)$ converging to a projective $k$–subspace
$M\subset \P^{n}_{\mathbb{H}}$.
Since $\gamma_{n_j}\to\gamma$ uniformly on compact sets outside
$\ker(\gamma)$ and $A\cap\mathrm{Im}(\gamma)\neq\varnothing$,
we have $M\cap\mathrm{Im}(\gamma)\neq\varnothing$.

Because $x\in\mathrm{Eq}(\{\gamma_m\})$,
the sequence $(\gamma_{n_j})$ is normal near $x$; hence,
after passing to a further subsequence, there exists a continuous map
$g$ defined on a neighbourhood of $x$ such that
$\gamma_{n_j}\to g$ uniformly on that neighbourhood.
Let $(\xi_j)$ and $(\eta_j)$ be sequences in $L$ with
$\xi_j\to x$, $\eta_j\to x$ and such that
$\gamma_{n_j}(\xi_j)\to \xi$, $\gamma_{n_j}(\eta_j)\to \eta$ for some distinct points $\xi,\eta\in M\setminus\mathrm{Im}(\gamma)$.
(These exist because $\dim M=\dim L=k$ and $M\not\subset \mathrm{Im}(\gamma)$.)

Uniform convergence to $g$ near $x$ implies
$\gamma_{n_j}(\xi_j)\to g(x)$ and $\gamma_{n_j}(\eta_j)\to g(x)$,
hence $\xi=\eta$, a contradiction.
Therefore $x\notin\mathrm{Eq}(\{\gamma_m\})$.

\medskip Combining all these gives
$$
\mathrm{Eq}(\{\gamma_m\})=\P^{n}_{\mathbb{H}}\setminus\ker(\gamma).$$
This completes the proof. 
\end{proof}





   From the previous Lemmas and Definition \ref{def_limit} we can deduce the following theorem.
   \begin{theorem}\label{Th:ker}
      Let $G\subset \mathrm{PSL}(n+1,\H)$ be a discrete subgroup and $g\in \mathrm{QP}(n+1,\H)$ be a limit of $G$ then 
      \begin{equation*}
          \mathrm{Eq}(G)=\P^n_\H \setminus \overline{\bigcup_{g\in Lim(G)}\ker(g)}.
      \end{equation*}
   \end{theorem}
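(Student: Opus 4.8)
The plan is to prove the two inclusions separately. Write $K := \overline{\bigcup_{g \in Lim(G)} \ker(g)}$, and use throughout that $\mathrm{Eq}(G)$ is open (immediate from \defref{eqcnt}), so that $\P^n_\H \setminus \mathrm{Eq}(G)$ is closed. The whole argument is an assembly of \lemref{lem:p1} and \lemref{lemma:ker}, in the spirit of the complex case of Cano--Seade.

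For the inclusion $\P^n_\H \setminus K \subseteq \mathrm{Eq}(G)$, I would fix $x \notin K$ and, using that $K$ is closed in the compact metrizable space $\P^n_\H$, choose an open neighborhood $U$ of $x$ with $\overline U$ compact and $\overline U \cap K = \varnothing$. To verify $x \in \mathrm{Eq}(G)$ it suffices to show that an arbitrary sequence $(g_m)\subset G$ admits a subsequence converging uniformly on $U$. If $(g_m)$ has a constant subsequence there is nothing to prove; otherwise I pass to a subsequence of pairwise distinct elements and apply \lemref{lem:p1} to extract a further subsequence $g_{m_k} \to g$, uniformly on compact subsets of $\P^n_\H \setminus \ker(g)$, for some $g \in \mathrm{QP}(n+1,\H)$. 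Since the $g_{m_k}$ are distinct, \defref{def_limit} gives $g \in Lim(G)$, hence $\ker(g) \subseteq K$ and therefore $\overline U \cap \ker(g) = \varnothing$. As $\overline U$ is then a compact subset of $\P^n_\H \setminus \ker(g)$, the convergence $g_{m_k} \to g$ is uniform on $\overline U \supseteq U$, which is exactly the required subsequential uniform limit. Thus $G_U$ is normal and $x \in \mathrm{Eq}(G)$.

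For the reverse inclusion $\mathrm{Eq}(G) \subseteq \P^n_\H \setminus K$, I would show that each individual kernel misses $\mathrm{Eq}(G)$ and then take closures. Fix $g \in Lim(G)$ with a sequence of distinct elements $g_m \to g$; reading this convergence as in \lemref{lem:p1}, the $g_m$ converge to $g$ uniformly on compact subsets of $\P^n_\H \setminus \ker(g)$, so \lemref{lemma:ker} applies and yields $\mathrm{Eq}(\{g_m : m \in \N\}) = \P^n_\H \setminus \ker(g)$. Since $\{g_m\} \subseteq G$, normality of $G_U$ forces normality of the subfamily $\{g_m|_U\}$ for every open $U$, whence $\mathrm{Eq}(G) \subseteq \mathrm{Eq}(\{g_m : m\in\N\}) = \P^n_\H \setminus \ker(g)$, i.e. $\ker(g) \cap \mathrm{Eq}(G) = \varnothing$. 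As $g \in Lim(G)$ was arbitrary, $\bigcup_{g\in Lim(G)} \ker(g)$ is disjoint from $\mathrm{Eq}(G)$; because $\mathrm{Eq}(G)$ is open its complement is closed, so taking closures gives $K \subseteq \P^n_\H \setminus \mathrm{Eq}(G)$. Combining the two inclusions yields the stated identity.

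The genuinely routine points are the openness of $\mathrm{Eq}(G)$ and the fact that normality passes to subfamilies. The step I expect to require the most care is reconciling the two notions of convergence in play: \defref{def_limit} records convergence $\gamma_m \to \gamma$ in $\mathrm{QP}(n+1,\H)$ (convergence of representing matrices up to scale), whereas \lemref{lem:p1} and \lemref{lemma:ker} are phrased via uniform convergence on compact subsets of $\P^n_\H \setminus \ker(\gamma)$. I would make explicit that for sequences in $\mathrm{PSL}(n+1,\H)$ these coincide, so that membership in $Lim(G)$ delivers precisely the hypothesis \lemref{lemma:ker} requires. A secondary point to dispose of cleanly is the dichotomy between sequences possessing a constant subsequence (the degenerate case $Lim(G)=\varnothing$, e.g. $G$ finite, where $K=\varnothing$ and $\mathrm{Eq}(G)=\P^n_\H$) and those with infinitely many distinct terms, for which \lemref{lem:p1} produces a bona fide element of $Lim(G)$.
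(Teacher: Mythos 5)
Your proposal is correct and is essentially the argument the paper intends: the paper states that Theorem~\ref{Th:ker} follows from Lemma~\ref{lem:p1}, Lemma~\ref{lemma:ker} and Definition~\ref{def_limit}, and your two-inclusion argument is exactly that deduction, with the extra care you note (reconciling convergence in $\mathrm{QP}(n+1,\H)$ with uniform convergence off the kernel, and the constant-subsequence dichotomy) being appropriate and correctly handled. No gaps.
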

   \section{Equicontinuity regions of cyclic subgroups generated by elliptic element} \label{sec:elliptic}
\begin{theorem}
\label{th:elliptic}
Let $\tilde{\gamma} \in \mathrm{PSL}(n+1, \mathbb{H})$ be an elliptic transformation with lift 
\[
\gamma = \mathrm{D}\big[e^{2\pi i \alpha_1}, \ldots, e^{2\pi i \alpha_{n+1}}\big] \in \mathrm{SL}(n+1, \mathbb{H}).
\]
Then the equicontinuity region of the cyclic group $\Gamma := \langle \tilde{\gamma} \rangle$ is given by
\[
\mathrm{Eq}(\Gamma) = \mathbb{P}^n_{\mathbb{H}}.
\]
\end{theorem}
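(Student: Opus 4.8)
The plan is to exploit the fact that an elliptic generator, after passing to its diagonal lift, produces only \emph{bounded} powers, so that no orbit can escape to a degenerate (pseudo-projective) limit. First I would compute the powers explicitly: since $\gamma = \mathrm{D}[e^{2\pi i \alpha_1},\ldots,e^{2\pi i \alpha_{n+1}}]$ is diagonal, for every $m\in\Z$ one has
\[
\gamma^m = \mathrm{D}[e^{2\pi i m\alpha_1},\ldots,e^{2\pi i m\alpha_{n+1}}],
\]
whose diagonal entries are all quaternions of unit modulus. In particular $(\gamma^m)^{*}\gamma^m = I_{n+1}$, so each power lies in the compact symplectic group $\Sp(n+1)$ of quaternionic unitary matrices. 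This is the structural observation that drives the whole argument.

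Second, I would use that $\Sp(n+1)$ acts on $\P^n_{\H}$ by isometries for the standard $\Sp(n+1)$-invariant (Fubini–Study type) metric $d$. Consequently every member of the family $\Gamma = \{\gamma^m\}$ is a $1$-Lipschitz self-map of the compact metric space $(\P^n_{\H},d)$, so the family is uniformly equicontinuous on all of $\P^n_{\H}$. By the Arzelà–Ascoli theorem, every sequence in $\Gamma$ admits a subsequence converging uniformly on $\P^n_{\H}$; hence $\Gamma_U$ is normal for every open $U$, in particular for a neighborhood of each $z\in\P^n_{\H}$, giving $\mathrm{Eq}(\Gamma) = \P^n_{\H}$.

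Alternatively — and this is the phrasing I would use to align with Theorem~\ref{Th:ker} — I would identify the limit set $Lim(\Gamma)$ directly. By Lemma~\ref{lem:p1}, every limit of $\Gamma$ arises as a subsequential limit of some $(\gamma^{m_k})$; since these all lie in the compact group $\Sp(n+1)\subset\mathrm{PSL}(n+1,\H)$, any such limit is again an element of $\Sp(n+1)$, i.e.\ an honest invertible projective transformation with $\ker(g)=\varnothing$. Thus $\bigcup_{g\in Lim(\Gamma)}\ker(g)=\varnothing$, and Theorem~\ref{Th:ker} yields $\mathrm{Eq}(\Gamma) = \P^n_{\H}\setminus\varnothing = \P^n_{\H}$.

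The main obstacle — really the only point needing care — is verifying that no subsequential limit degenerates into a genuine pseudo-projective map with nonempty kernel; this is precisely the collapse that \emph{will} occur in the loxodromic and loxoparabolic cases, where unequal moduli of eigenvalues force orbits to concentrate on a proper subspace. Here it is ruled out exactly because all eigenvalues have unit modulus, which keeps $(\gamma^m)$ inside the compact, determinant-bounded group $\Sp(n+1)$. I should therefore make sure the invariant metric and the isometric $\Sp(n+1)$-action on $\P^n_{\H}$ are set up or cited cleanly, since the equicontinuity conclusion hinges entirely on that compactness.
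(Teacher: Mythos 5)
Your argument is correct, and it rests on the same structural fact that drives the paper's proof --- namely that because every eigenvalue has unit modulus, no sequence of powers of $\gamma$ can degenerate to a pseudo-projective limit with nonempty kernel --- but you package it differently. The paper splits into two cases: if all $\alpha_i$ are rational, $\Gamma$ is finite and normality is immediate; if some $\alpha_i$ is irrational, it extracts a convergent subsequence of $(\gamma^{m})$, observes that the limit is again a diagonal matrix with unit-modulus entries and hence has empty kernel, and invokes Lemma~\ref{lemma:ker}. Your route through the compact group $\Sp(n+1)$ subsumes both cases at once: since each $\gamma^m$ is quaternionic-unitary, the family acts by isometries of the $\Sp(n+1)$-invariant metric on $\mathbb{P}^n_{\mathbb{H}}$, and Arzel\`a--Ascoli gives normality on all of $\mathbb{P}^n_{\mathbb{H}}$ with no case distinction and no need to identify the limits explicitly; your alternative phrasing (every subsequential limit lies in $\Sp(n+1)$, hence is invertible with empty kernel) is essentially the paper's Case~2 made uniform. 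Two small points of care: Theorem~\ref{Th:ker} is stated for \emph{discrete} subgroups, and $\Gamma$ fails to be discrete precisely when some $\alpha_i$ is irrational, so it is cleaner to rely either on your self-contained isometry/Arzel\`a--Ascoli argument or on Lemma~\ref{lemma:ker} applied to sequences, as the paper does; and you should record explicitly that the pseudo-projective limit of $(\gamma^{m_k})$ is represented by the matrix limit taken inside $\Sp(n+1)$ without any rescaling, which is exactly what rules out the collapse onto a proper subspace that occurs in the loxodromic and loxoparabolic cases.
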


\begin{proof}
We consider two cases depending on the rationality of the parameters $\alpha_i$.

\smallskip
\noindent
\textbf{Case 1.} Suppose that all $\alpha_i \in \mathbb{Q}$. Then there exists $n_0 \in \mathbb{N}$ such that 
\[
e^{2\pi i n_0 \alpha_i} = 1 \quad \text{for all } i = 1, 2, \ldots, n+1.
\]
Hence, $\gamma$ is of finite order, and so is $\Gamma$. That implies,
\[
\mathrm{Eq}(\Gamma) = \mathbb{P}^n_{\mathbb{H}}.
\]

\smallskip
\noindent
\textbf{Case 2.} Suppose that at least one of the $\alpha_i$, say $\alpha_k$, is irrational. Then there exists a subsequence $(\gamma^{n_k})$ converging to some 
\[
g \in \mathrm{Lim}(\Gamma).
\]
Since $|e^{2\pi i \alpha_i n_k}| = 1$ for all $i$, the limit map can be written as 
\[
g = \mathrm{D}[\alpha_1, \ldots, \alpha_{n+1}],
\]
where $|\alpha_i| = 1$ for every $i$. As $\ker(g) = \emptyset$, it follows from Lemma \ref{lemma:ker} that
\[
\mathrm{Eq}(\Gamma) = \mathbb{P}^n_{\mathbb{H}}.
\]
This completes the proof.
\end{proof}

\section{Equicontinuity regions of cyclic subgroups generated by parabolic element}\label{sec:parabolic}
In this section we shall compute the equicontinuity region for the cyclic subgroups generated by parabolic elements.

\begin{lemma}
\label{th:par1}
Let $\tilde{\gamma} \in \mathrm{PSL}(n+1, \mathbb{H})$ be a parabolic transformation whose lift is given by 
\[
\gamma = \mathrm{J}(\lambda, n+1) \in \mathrm{SL}(n+1, \mathbb{H}).
\]
Then the equicontinuity region of the cyclic group $\Gamma := \langle \tilde{\gamma} \rangle$ is 
\[
\mathrm{Eq}(\Gamma) = L\{ e_1, e_2, \ldots, e_n \}.
\]
\end{lemma}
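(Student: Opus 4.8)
The plan is to prove the two inclusions $L\{e_1,\ldots,e_n\}\subseteq\mathrm{Eq}(\Gamma)$ and $\mathrm{Eq}(\Gamma)\subseteq L\{e_1,\ldots,e_n\}$, using the explicit powers of the single Jordan block as the main computational input. Since $\gamma=\mathrm{J}(\lambda,n+1)$ is parabolic we have $|\lambda|=1$, so the scalar $\lambda$ contributes no growth and all the asymptotics of $\gamma^{\pm m}$ are carried by the binomial coefficients recorded in the Notations. The first step is to extract the pseudo-projective limit of the family through the Note there: after normalizing $\mathrm{J}(\lambda,n+1)^{\pm m}$ by its $(1,n+1)$-entry, every remaining entry tends to $0$, so along every subsequence the normalized powers converge to $\mathrm{E}_{1,n+1}$. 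Writing $g\in\mathrm{Lim}(\Gamma)$ for the resulting pseudo-projective map, I would record $\ker(g)=L\{e_1,\ldots,e_n\}$ and $\mathrm{Im}(g)=\langle e_1\rangle\subseteq L\{e_1,\ldots,e_n\}$; this single limit is the device that governs the orbit behaviour of $\Gamma$.

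For the inclusion $\mathrm{Eq}(\Gamma)\subseteq L\{e_1,\ldots,e_n\}$ I would argue that no point $z\notin L\{e_1,\ldots,e_n\}$ admits a neighbourhood on which the restricted family is normal. Here Lemma \ref{lem:p1} supplies convergence $\gamma^{m_k}\to g$ away from $\ker(g)$, and the fact that $\mathrm{Im}(g)=\langle e_1\rangle$ is concentrated on a single point is used to produce, from any neighbourhood of $z$, sequences whose images are forced together in the limit while their preimages remain separated; this collapse obstructs the uniform convergence required for normality. The role of the quaternionic normalizing scalars $\alpha_m\in\H^{\times}$, which act on the right and need not commute with the complex $\lambda$, has to be tracked carefully in this step.

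For the reverse inclusion $L\{e_1,\ldots,e_n\}\subseteq\mathrm{Eq}(\Gamma)$ I would exploit the invariant-flag structure of a single Jordan block: the hyperplane $\langle e_1,\ldots,e_n\rangle$ is $\gamma$-invariant and $\gamma$ restricts on it to the smaller block $\mathrm{J}(\lambda,n)$. The plan is to induct on the block size, showing that on a neighbourhood inside $L\{e_1,\ldots,e_n\}$ every sequence of restricted powers has a uniformly convergent subsequence, so that each point of the subspace lies in $\mathrm{Eq}(\Gamma)$. Assembling the two inclusions then yields $\mathrm{Eq}(\Gamma)=L\{e_1,\ldots,e_n\}$. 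I expect the main obstacle to be exactly this second inclusion: establishing \emph{uniform} normality on neighbourhoods inside $L\{e_1,\ldots,e_n\}$, rather than mere pointwise convergence, requires a delicate comparison of the competing binomial growth rates along the invariant flag together with control of the non-commutative normalizations, and it is there that the quaternionic parabolic dynamics differs most sharply from the complex case.
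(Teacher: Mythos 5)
Your opening computation agrees with the paper's: the paper's proof likewise normalizes the powers, considering $\tfrac{1}{\binom{m}{n}}\gamma^{m}$ and $\tfrac{1}{\binom{m+n-1}{n}}\gamma^{-m}$, and shows every pseudo-projective limit $g\in\mathrm{Lim}(\Gamma)$ has the form $a\,\mathrm{E}_{1,n+1}$ with $|a|=1$, so that $\ker(g)=L\{e_1,\dots,e_n\}$ and $\mathrm{Im}(g)=\langle e_1\rangle$. But from that point on your plan goes wrong, because you have taken the displayed equality in the lemma literally, whereas it is a notational slip that recurs throughout the paper: by the paper's own Lemma \ref{lemma:ker} and Theorem \ref{Th:ker}, the equicontinuity region is the \emph{complement} of the closure of the union of the kernels of the limits, so the intended conclusion is $\mathrm{Eq}(\Gamma)=\mathbb{P}^n_{\mathbb{H}}\setminus L\{e_1,\dots,e_n\}$ (compare part (ii) of the main theorem in the introduction, which says $\mathbb{P}^n_{\mathbb{H}}\setminus\mathrm{Eq}(\Gamma)$ is a single projective subspace). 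Indeed the literal statement is impossible on general grounds: $\mathrm{Eq}(\Gamma)$ is open by Definition \ref{eqcnt}, while a proper projective subspace is closed with empty interior, so no proof strategy can establish $\mathrm{Eq}(\Gamma)=L\{e_1,\dots,e_n\}$ as an identity of sets.

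Consequently both of your inclusion arguments are dynamically backwards, and each would fail. For the inclusion $\mathrm{Eq}(\Gamma)\subseteq L\{e_1,\dots,e_n\}$ you try to refute normality at points $z\notin L\{e_1,\dots,e_n\}$; but by Lemma \ref{lem:p1} the normalized powers converge uniformly on compact subsets of $\mathbb{P}^n_{\mathbb{H}}\setminus\ker(g)$, and since $\mathrm{Im}(g)$ is the single point $[e_1]$ the limit there is the \emph{constant} map $z\mapsto[e_1]$. Uniform convergence to a constant map on a compact neighbourhood gives normality; the ``forcing together of images'' you invoke is no obstruction, since normality does not require the limit to be injective. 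That collapse argument is precisely the mechanism the paper uses in the proof of Lemma \ref{lemma:ker} to show that points \emph{in} $\ker(g)$ are excluded from the equicontinuity region --- the opposite of how you deploy it. Your third paragraph fails for the same reason: equicontinuity is tested on open neighbourhoods in the ambient $\mathbb{P}^n_{\mathbb{H}}$, not on relative neighbourhoods inside the $\gamma$-invariant hyperplane, so the restriction of $\gamma$ to $\langle e_1,\dots,e_n\rangle$ being a smaller Jordan block is irrelevant; every ambient neighbourhood of a point of the hyperplane meets $\ker(g)$, and the two-sequence argument of Lemma \ref{lemma:ker} shows such points are never equicontinuity points. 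The correct completion of your (correct) first step is short: since every $g\in\mathrm{Lim}(\Gamma)$ has $\ker(g)=L\{e_1,\dots,e_n\}$, Theorem \ref{Th:ker} yields at once $\mathrm{Eq}(\Gamma)=\mathbb{P}^n_{\mathbb{H}}\setminus L\{e_1,\dots,e_n\}$, with no two-inclusion argument and no induction needed.
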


\begin{proof}
The proof is divided into two parts.

First, consider the powers of $\gamma$. By computing $\gamma^k$ and examining the sequence
\[
\frac{1}{{k \choose n}}\gamma^k,
\]
If a subsequence converges to a pseude projective limit then it must be of the form;
\[
g = a\,\mathrm{E}_{1,n+1},
\]
where $|a| = 1$.

Next, we analyze the inverses $\gamma^{-k}$. Consider the sequence
\[
\frac{1}{{k+n-1 \choose n}} \gamma^{-k}.
\]

Also , if a subsequence of $\frac{1}{{k+n-1 \choose n}} \gamma^{-k}$ converges to a pseudo projective limit then it must be of the type
\[
g = a\,\mathrm{E}_{1,n+1},
\]
where again $|a| = 1$. Therefore, if $g\in Lim(\langle \tilde{\gamma}\rangle)$ 
\\then \[
g = a\,\mathrm{E}_{1,n+1},
\] where $|a|=1$. Since 
\[
\ker(g) = L\{ e_1, e_2, \ldots, e_n \}.
\]

 we conclude that the equicontinuity region of $\Gamma$ is \\ 
$\mathrm{Eq}(\Gamma) = L\{ e_1, e_2, \ldots, e_n \}$. 
\end{proof}

\begin{lemma}
\label{th:par2}
Let $\tilde{\gamma} \in \mathrm{PSL}(n+1, \mathbb{H})$ be a parabolic transformation whose lift is given by 
\[
\gamma = 
\begin{bmatrix}
    \mathrm{D}(\lambda_1, \lambda_2, \ldots, \lambda_l) & \\
    & \mathrm{J}(\mu, k)
\end{bmatrix}
\in \mathrm{SL}(n+1, \mathbb{H}).
\]
Then the equicontinuity region of the cyclic group $\Gamma := \langle \tilde{\gamma} \rangle$ is 
\[
\mathrm{Eq}(\Gamma) = L\{ e_1, e_2, \ldots, e_n \}.
\]
\end{lemma}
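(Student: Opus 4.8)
The plan is to run the argument of Lemma~\ref{th:par1} blockwise, the one new ingredient being the coexistence of the semisimple block $\mathrm{D}(\lambda_1,\dots,\lambda_l)$ and the single nontrivial Jordan block $\mathrm{J}(\mu,k)$. Since $\tilde\gamma$ is parabolic, every $\lambda_i$ and $\mu$ lies on the unit circle and $k\geq 2$ (for $k=1$ the matrix would be diagonal, hence semisimple). Writing
\[
\gamma^m=\begin{bmatrix}\mathrm{D}(\lambda_1^m,\dots,\lambda_l^m)&\\&\mathrm{J}(\mu,k)^m\end{bmatrix},
\]
the power formula of Section~\ref{notations} shows that all diagonal entries have modulus $1$, whereas the top-right entry of the Jordan block---occupying position $(l+1,n+1)$ of the full matrix---equals $\binom{m}{k-1}\mu^{m-k+1}$ and therefore grows like $m^{k-1}\to\infty$. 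Thus the entry of maximal modulus in $\gamma^m$ always sits inside the Jordan block.

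I would then normalize by this dominant entry and pass to a pseudo-projective limit via Lemma~\ref{lem:p1}. By the Note following the power formula every other Jordan entry becomes negligible, and since the diagonal entries are bounded they vanish after dividing by $\binom{m}{k-1}$; choosing a subsequence along which $\mu^{m-k+1}\to a$ with $|a|=1$ yields
\[
g=a\,\mathrm{E}_{l+1,n+1},\qquad |a|=1.
\]
The identical computation applied to $\gamma^{-m}$, using the inverse-power formula of Section~\ref{notations}, gives limits of the same shape. Hence every $g\in\mathrm{Lim}(\Gamma)$ is a rank-one map with common kernel $\ker(g)=L\{e_1,\dots,e_n\}$, and Theorem~\ref{Th:ker} then delivers the asserted equicontinuity region $\mathrm{Eq}(\Gamma)$.

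The step I expect to require the most care is the \emph{domination claim}: that for every extracted subsequence the $(l+1,n+1)$-entry strictly dominates all others, so that the diagonal block can never survive into a pseudo-projective limit and no alternative normalization produces a limit with a different (smaller) kernel. This reduces to the uniqueness of the maximal polynomial growth rate $m^{k-1}$ of the Jordan block against the bounded diagonal. Here the non-commutativity of $\mathbb{H}$ causes no difficulty, since $\lambda_i$ and $\mu$ are complex representatives of their similarity classes and all the relevant powers and subsequential limits take place on the unit circle in $\mathbb{C}$.
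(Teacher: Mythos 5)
Your proposal is correct and follows essentially the same route as the paper's proof: normalize $\gamma^{\pm m}$ by the dominant binomial entry $\binom{m}{k-1}$ (resp.\ $\binom{m+k-2}{k-1}$) of the Jordan block, observe that the unit-modulus diagonal block is killed by this normalization, and conclude that every pseudo-projective limit is a rank-one map of the form $a\,\mathrm{E}_{l+1,n+1}$ with kernel $L\{e_1,\dots,e_n\}$. Your explicit justification of the domination claim (polynomial growth $m^{k-1}$ of the Jordan corner entry versus the bounded diagonal, requiring $k\geq 2$) is a detail the paper leaves implicit, but it is the same argument.
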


\begin{proof}
Since $\gamma$ is block diagonal, we have
\[
\gamma^m =
\begin{bmatrix}
    \mathrm{D}(\lambda_1, \lambda_2, \ldots, \lambda_l)^m & \\
    & \mathrm{J}(\mu, k)^m
\end{bmatrix}.
\]
Consider the forward sequence
\[
\frac{1}{{m \choose k-1}} \gamma^m.
\]
 then every convergent subsequence of this sequence converging to of the form;
\[
g =
\begin{bmatrix}
    \mathbf{0} & \\
    & a\,\mathrm{E}_{1,k}
\end{bmatrix}  \in \mathrm{QP}(n+1,\mathbb{H})
\]
where $|a| = 1$. Consequently,
\[
\ker(g) = L\{ e_1, e_2, \ldots, e_n \}.
\]

Similarly, consider the backward sequence
\[
\frac{1}{{m + k - 2 \choose k - 1}} \gamma^{-m}.
\]
Also every subsequence of this sequence converges to of the form;
\[
g =
\begin{bmatrix}
    \mathbf{0} & \\
    & a\,\mathrm{E}_{k-1,k}
\end{bmatrix} \in \mathrm{QP}(n+1,\mathbb{H})
\]
where again $|a| = 1$. Hence,
\[
\ker(g) = L\{ e_1, e_2, \ldots, e_n \}.
\]

Combining both cases, we conclude that
$\mathrm{Eq}(\Gamma) = L\{ e_1, e_2, \ldots, e_n \}$. 
\end{proof}

\begin{lemma}
\label{th:par3}
Let $\tilde{\gamma} \in \mathrm{PSL}(n+1, \mathbb{H})$ be a parabolic transformation whose lift is given by 
\[
\gamma =
\begin{bmatrix}
    \mathrm{J}(\lambda, k) & \\
    & \mathrm{J}(\lambda, k)
\end{bmatrix}
\in \mathrm{SL}(n+1, \mathbb{H}), \qquad 2k = n + 1.
\]
Then the equicontinuity region of the cyclic group $\Gamma := \langle \tilde{\gamma} \rangle$ is 
\[
\mathrm{Eq}(\Gamma) = L\{ e_1, e_2, \ldots, e_{k-1}, e_{k+1}, \ldots, e_{2k} \}.
\]
\end{lemma}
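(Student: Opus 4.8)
The plan is to follow the same scheme as in Lemmas~\ref{th:par1} and~\ref{th:par2}: extract the pseudo-projective limits of the powers $\gamma^{\pm m}$ from the explicit Jordan-power formulas of Section~\ref{notations}, assemble the limit set, and then apply Theorem~\ref{Th:ker}. Since $\gamma$ is block diagonal,
\[
\gamma^{m}=\begin{bmatrix}\mathrm{J}(\lambda,k)^{m}&\\&\mathrm{J}(\lambda,k)^{m}\end{bmatrix},
\]
the problem decouples into two copies of a single Jordan block, each governed by the power formula recorded in Section~\ref{notations}, with $|\lambda|=1$ because $\gamma$ is parabolic.

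The feature distinguishing this case from Lemmas~\ref{th:par1}--\ref{th:par2} is that the two blocks are \emph{identical}: they carry the same eigenvalue $\lambda$ and the same size $k$, hence grow at exactly the same rate. In each $k\times k$ block the entry of largest modulus is the $(1,k)$ entry, of size $\binom{m}{k-1}$, and by the Note in Section~\ref{notations} every other entry of that block is negligible against it. I would therefore normalize the forward powers by $\binom{m}{k-1}$ and the backward powers by $\binom{m+k-2}{k-1}$, and pass to a subsequence along which the unimodular factor $\lambda^{\,m-k+1}$ (respectively its inverse-power analogue) converges to some $a$ with $|a|=1$. Because both blocks share the same $\lambda$, the \emph{same} scalar $a$ governs both leading entries, so every subsequential limit is the rank-two pseudo-projective map
\[
g=a\bigl(\mathrm{E}_{1,k}+\mathrm{E}_{k+1,2k}\bigr)\in\mathrm{QP}(n+1,\mathbb{H}),\qquad |a|=1.
\]
This is the key structural difference from the previous lemmas, where the limit was always rank one.

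The kernel is then immediate: $g$ sends $e_{k}\mapsto a\,e_{1}$ and $e_{2k}\mapsto a\,e_{k+1}$ and kills every other basis vector, so $\ker(g)$ is the span of the standard basis vectors lying off the two Jordan chain-tops $e_{k}$ and $e_{2k}$. This subspace does not depend on the phase $a$ and is common to the forward and backward limits; hence $\overline{\bigcup_{g\in\mathrm{Lim}(\Gamma)}\ker(g)}$ is a single projective subspace, in agreement with the parabolic case of the main theorem. Feeding this into Theorem~\ref{Th:ker} (via Lemma~\ref{lemma:ker}) then yields $\mathrm{Eq}(\Gamma)$, and identifying the resulting projective subspace gives the region described in the statement.

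The step I expect to be the main obstacle is verifying that these top-scale limits exhaust $\mathrm{Lim}(\Gamma)$, so that the limit set is exactly this one subspace. One must argue that normalizing $\gamma^{\pm m}$ by anything other than the maximal coefficient $\binom{m}{k-1}$ (resp. $\binom{m+k-2}{k-1}$) produces either the zero map or an unbounded family, and---crucially---that no choice of subsequence can decouple the two equal blocks into independent limits with distinct kernels. This rigidity is precisely what the coincidence of size and eigenvalue guarantees, since it locks the two leading entries into a common scale and a common phase. Once this scale analysis is secured, the kernel computation and the appeal to Theorem~\ref{Th:ker} are routine, exactly as in Lemmas~\ref{th:par1} and~\ref{th:par2}.
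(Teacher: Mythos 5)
Your proposal follows essentially the same route as the paper's proof: normalize $\gamma^{\pm m}$ by the maximal binomial coefficient $\binom{m}{k-1}$ (resp.\ $\binom{m+k-2}{k-1}$), extract the rank-two subsequential limit $g=a(\mathrm{E}_{1,k}+\mathrm{E}_{k+1,2k})$ with $|a|=1$, compute its kernel, and conclude via Theorem~\ref{Th:ker}. Your identification of $\ker(g)$ as the span of the basis vectors other than the two chain-tops $e_k$ and $e_{2k}$ is correct and in fact matches the subspace $L\{e_1,\ldots,e_{k-1},e_{k+1},\ldots,e_{2k-1}\}$ appearing at the end of the paper's argument.
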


\begin{proof}
The $m$-th power of $\gamma$ is given by
\[
\gamma^m =
\begin{bmatrix}
    \mathrm{J}(\lambda, k)^m & \\
    & \mathrm{J}(\lambda, k)^m
\end{bmatrix}.
\]
Consider the sequence
\[
\frac{1}{{m \choose k-1}} \gamma^m.
\]
It admits a subsequence converging to
\[
g =
\begin{bmatrix}
    a\,\mathrm{E}_{1,k} & \\
    & a\mathrm{E}_{1,k}
\end{bmatrix},
\]
where $|a| = 1$. Hence,
\[
\ker(g) = L\{ e_1, e_2, \ldots, e_{k-1}, e_{k+1}, \ldots, e_{n+1} \}.
\]

Similarly, for the inverse powers $\gamma^{-m}$, the corresponding normalized sequence 
\[
\frac{1}{{m + k - 2 \choose k - 1}} \gamma^{-m}
\]
has a subsequence converging to a matrix of the same form, yielding
\[
\ker(g) = L\{ e_1, e_2, \ldots, e_{k-1}, e_{k+1}, \ldots, e_{n+1} \}.
\]

Combining both cases, we conclude that
\[
\mathrm{Eq}(\Gamma) = L\{ e_1, e_2, \ldots, e_{k-1}, e_{k+1}, \ldots, e_{2k-1} \}.
\]
This completes the proof. 
\end{proof}

\begin{lemma}
\label{th:par4}
Let $\tilde{\gamma} \in \mathrm{PSL}(n+1, \mathbb{H})$ be a parabolic transformation whose lift is given by 
\[
\gamma =
\begin{bmatrix}
    \mathrm{J}(\lambda, k) & \\
    & \mathrm{J}(\lambda, l)
\end{bmatrix}
\in \mathrm{SL}(n+1, \mathbb{H}), \qquad k + l = n + 1, \; k < l.
\]
Then the equicontinuity region of the cyclic group $\Gamma := \langle \tilde{\gamma} \rangle$ is 
\[
\mathrm{Eq}(\Gamma) = L\{ e_1, e_2, \ldots, e_{k-1}, e_{k+1}, \ldots, e_{n+1} \}.
\]
\end{lemma}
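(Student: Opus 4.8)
The plan is to invoke Theorem~\ref{Th:ker}, which reduces the computation of $\mathrm{Eq}(\Gamma)$ to determining the set $\mathrm{Lim}(\Gamma)$ of pseudo-projective limits of the cyclic group and then taking the closure of the union of their kernels. Since $\gamma$ is block diagonal, every power is block diagonal,
\[
\gamma^{\pm m} = \begin{bmatrix} \mathrm{J}(\lambda,k)^{\pm m} & \\ & \mathrm{J}(\lambda,l)^{\pm m} \end{bmatrix},
\]
so the analysis splits into an asymptotic study of the two Jordan blocks, exactly as in Lemmas~\ref{th:par1}--\ref{th:par3}. I would run the argument separately for the forward powers $\gamma^{m}$ and the inverse powers $\gamma^{-m}$, collecting all subsequential limits provided by Lemma~\ref{lem:p1}.

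Next comes the key computation. Using the explicit binomial expressions for $\mathrm{J}(\lambda,p)^{\pm m}$ recorded in the Notations, and recalling $|\lambda|=1$ by parabolicity, the entry of largest modulus in each block is its top-right corner: modulus $\binom{m}{p-1}$ for the positive powers and $\binom{m+p-2}{p-1}$ for the negative powers, with $p\in\{k,l\}$. The decisive point, and precisely what distinguishes this case from the equal-block case of Lemma~\ref{th:par3}, is the \emph{strict} inequality $k<l$: since $\binom{m}{k-1}/\binom{m}{l-1}=O(m^{k-l})\to 0$, the corner of the larger block dominates that of the smaller one. I would therefore normalize $\gamma^{m}$ by $\binom{m}{l-1}$ and $\gamma^{-m}$ by $\binom{m+l-2}{l-1}$.

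Under this normalization I expect two things: the entire $k$-block is $O(m^{k-l})\to 0$ and so vanishes; and inside the $l$-block every off-corner entry is negligible relative to the corner, so only the $(1,l)$-entry survives, with modulus $1$. The surviving phase $\lambda^{\,m-l+1}$ need not converge, but by compactness of the unit sphere of $\mathbb{H}$ it does so along a subsequence to some $a$ with $|a|=1$. Hence every subsequential limit has the form
\[
g = \begin{bmatrix} \mathbf{0}_{k} & \\ & a\,\mathrm{E}_{1,l} \end{bmatrix} \in \mathrm{QP}(n+1,\mathbb{H}), \qquad |a|=1,
\]
and conversely each such $g$ is realized. Its single nonzero entry sits at position $(k+1,n+1)$ in full coordinates, so the linear representative annihilates every $e_j$ with $j\neq n+1$; thus $\ker(g)=L\{e_1,\dots,e_n\}$ for \emph{every} limit, whence $\overline{\bigcup_{g\in\mathrm{Lim}(\Gamma)}\ker(g)}$ is this single hyperplane. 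Feeding this into Theorem~\ref{Th:ker} then pins down the equicontinuity region, and the outcome coincides with that of Lemmas~\ref{th:par1} and~\ref{th:par2}, since the smaller Jordan block contributes nothing to the limit.

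The main obstacle I anticipate is not any single estimate but the bookkeeping required to show that normalizing by the globally largest entry really captures \emph{all} of $\mathrm{Lim}(\Gamma)$: one must verify simultaneously that the smaller block is asymptotically invisible and that within the larger block no entry besides the corner contributes, and that this holds uniformly enough for convergence in $\mathrm{QP}(n+1,\mathbb{H})$ and uniformly on compact sets away from the kernel. The phase indeterminacy of $\lambda^{\,m-l+1}$ is handled routinely by passing to subsequences and does not affect the kernel, which is the only datum that enters Theorem~\ref{Th:ker}.
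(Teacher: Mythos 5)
Your proposal follows essentially the same route as the paper's proof: normalize $\gamma^{m}$ by $\binom{m}{l-1}$ and $\gamma^{-m}$ by $\binom{m+l-2}{l-1}$, use the strict inequality $k<l$ to make the smaller Jordan block vanish in the limit, identify every pseudo-projective limit as $\mathbf{0}_k\oplus a\,\mathrm{E}_{1,l}$ with $|a|=1$, and feed the kernels into Theorem~\ref{Th:ker}. One remark: your computation (like the paper's own proof) yields $\ker(g)=L\{e_1,\ldots,e_n\}$ for every limit, so the subspace describing $\mathbb{P}^n_{\mathbb{H}}\setminus\mathrm{Eq}(\Gamma)$ is the single hyperplane $L\{e_1,\ldots,e_n\}$, exactly as in Lemmas~\ref{th:par1} and~\ref{th:par2}; the subspace $L\{e_1,\ldots,e_{k-1},e_{k+1},\ldots,e_{n+1}\}$ displayed in the statement of Lemma~\ref{th:par4} does not agree with this computation and appears to be a misprint carried over from the equal-block case, not a gap in your argument.
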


\begin{proof}
The $m$-th power of $\gamma$ is given by
\[
\gamma^m =
\begin{bmatrix}
    \mathrm{J}(\lambda, k)^m & \\
    & \mathrm{J}(\lambda, l)^m
\end{bmatrix}.
\]
Consider the sequence
\[
\frac{1}{{m \choose l - 1}} \gamma^m.
\]
Since \(k < l\), the ratio of binomial coefficients satisfies;
\[
\frac{\binom{m}{\,k-1\,}}{\binom{m}{\,l-1\,}} \longrightarrow 0 
\qquad \text{as } m \to \infty.
\]

and 
\[
\frac{1}{{m \choose l - 1}} \mathrm{J}(\lambda, l)^m \longrightarrow a\,\mathrm{E}_{1,l}
\]
for some $|a| = 1$, it follows that
\[
\frac{1}{{m \choose l - 1}} \gamma^m \longrightarrow
g =
\begin{bmatrix}
    \mathbf{0} & \\
    & a\,\mathrm{E}_{1,l}
\end{bmatrix}  \in \mathrm{QP}(n+1,\mathbb{H})
\]
Hence,
$\ker(g) = L\{ e_1, e_2, \ldots, e_{k+l-1} \}.$

Similarly, for the inverse powers $\gamma^{-m}$, the sequence
$\frac{1}{{m + l - 2 \choose l - 1}} \gamma^{-m}$
converges to the same limit
\[
g =
\begin{bmatrix}
    \mathbf{0} & \\
    & a\,\mathrm{E}_{1,l}
\end{bmatrix} \in \mathrm{QP}(n+1,\mathbb{H})
\]
and therefore,
$\ker(g) = L\{ e_1, e_2, \ldots, e_{k+l-1} \}.$

Combining the two cases, we conclude that
\[
\mathrm{Eq}(\Gamma) = L\{ e_1, e_2, \ldots, e_{k-1}, e_{k+1}, \ldots, e_{n+1} \}.
\]
This completes the proof. 
\end{proof}

\begin{theorem}
\label{th:par-main}
Let $\gamma \in \mathrm{PSL}(n+1, \mathbb{H})$ be a parabolic element whose Jordan form consists of 
a diagonal block of size $k_1$, and for $2 \le i \le m$, Jordan blocks of sizes $k_i$ appearing 
$l_i$ times, with $k_i < k_{i+1}$, and satisfying
\[
k_1 + \sum_{i=2}^{m} l_i k_i = n.
\]
Then the equicontinuity region of $\gamma$ is given by
\begin{align*}
\mathrm{Eq}(\gamma) = L\Big\{
& e_1, \ldots, 
e_{k_1 + \sum_{i=2}^{m-1} l_i k_i + (k_m - 1)}, \,
e_{k_1 + \sum_{i=2}^{m-1} l_i k_i + (k_m + 1)}, \ldots,
e_{k_1 + \sum_{i=2}^{m-1} l_i k_i + (2k_m - 1)}, \, \\
& e_{k_1 + \sum_{i=2}^{m-1} l_i k_i + (2k_m + 1)}, \ldots, \,
e_{k_1 + \sum_{i=2}^{m-1} l_i k_i + (j k_m - 1)}, \,
e_{k_1 + \sum_{i=2}^{m-1} l_i k_i + (j k_m + 1)}, \ldots, \\
& e_{k_1 + \sum_{i=2}^{m-1} l_i k_i + (l_m k_m - 1)}
\Big\}.
\end{align*}
\end{theorem}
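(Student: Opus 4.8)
The plan is to reduce the general statement to the single-block and two-block computations of Lemmas~\ref{th:par1}--\ref{th:par4} and then to read off the answer from the kernel description of the equicontinuity region in Theorem~\ref{Th:ker}. First I would place $\gamma$ in its quaternionic Jordan form
\[
\gamma=\mathrm{D}(\lambda_1,\dots,\lambda_{k_1})\ \oplus\ \bigoplus_{i=2}^{m}\ \bigoplus_{j=1}^{l_i}\mathrm{J}(\mu_{i,j},k_i),
\]
where, since $\gamma$ is parabolic, every $\lambda_t$ and every $\mu_{i,j}$ has unit modulus and at least one block has size $k_m\ge 2$. Because $\gamma$ is block diagonal, so is each power $\gamma^{\pm s}$, and on every block I would substitute the explicit expansions of $\mathrm{J}(\mu,k)^{\pm s}$ listed in the Notations. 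The whole argument then reduces to isolating which entries dominate as $s\to\infty$.

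The first real step is to determine the dominant growth rate. Among all blocks, the fastest growing entry is the top-right corner of a block of maximal size $k_m$, which grows like $\binom{s}{k_m-1}$ for positive powers and like $\binom{s+k_m-2}{k_m-1}$ for negative powers. Normalizing $\gamma^{s}$ by $\binom{s}{k_m-1}$, I would show that the diagonal block and every Jordan block of size $k_i<k_m$ vanish in the limit: the diagonal block because its entries have modulus one while the normalizing factor tends to $\infty$, and the strictly smaller Jordan blocks because
\[
\frac{\binom{s}{k_i-1}}{\binom{s}{k_m-1}}\longrightarrow 0\qquad(k_i<k_m),
\]
precisely the estimate already used in Lemma~\ref{th:par4}. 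Each block of size exactly $k_m$ instead converges, along a suitable subsequence, to $a\,\mathrm{E}_{1,k_m}$ with $|a|=1$, since its normalized corner entry has modulus $|\mu_{m,j}^{\,s-k_m+1}|=1$; the existence of such subsequential limits in $\mathrm{QP}(n+1,\H)$ is furnished by Lemma~\ref{lem:p1}. Running the same analysis on $\gamma^{-s}$, using the note in the Notations that the $(1,k_m)$ entry also dominates the inverse block, produces limits of identical shape.

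It follows that every $g\in\mathrm{Lim}(\Gamma)$ is a pseudo-projective map that vanishes on the diagonal block and on all blocks of size $<k_m$, and equals $a_j\,\mathrm{E}_{1,k_m}$ with $|a_j|=1$ on the $j$-th maximal block for $1\le j\le l_m$. Inside each $k_m$-block the kernel of $a_j\,\mathrm{E}_{1,k_m}$ is spanned by all of that block's standard basis vectors except the last, independently of the phase $a_j$. Writing $d=k_1+\sum_{i=2}^{m-1}l_ik_i$ for the dimension of the vanishing part, the kernel of every such limit is therefore the projective subspace spanned by all $e_t$ with $t\notin\{\,d+jk_m : 1\le j\le l_m\,\}$. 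Since this subspace is independent of the phases and of the subsequence, the union $\bigcup_{g\in\mathrm{Lim}(\Gamma)}\ker(g)$ collapses to this single closed subspace, and feeding it into Theorem~\ref{Th:ker} reproduces exactly the enumeration in the statement, the retained indices being precisely those that avoid the corner positions $d+k_m,\ d+2k_m,\ \dots,\ d+l_mk_m$.

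The step I expect to be most delicate is not conceptual but combinatorial: tracking the cumulative offset $d$ through the block decomposition and verifying that the omitted coordinates are exactly the maximal-block corners, so that the three-line enumeration in the statement is reproduced verbatim. A secondary point requiring care is excluding spurious limits coming from a different normalization; here the strict inequalities $k_i<k_m$ together with the displayed binomial ratio ensure that only the $k_m$-blocks survive, so that no further projective subspace can enter the union of kernels.
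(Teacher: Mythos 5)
Your proposal follows essentially the same route the paper intends: the paper's proof of Theorem~\ref{th:par-main} is only the one-line citation of Lemmas~\ref{th:par1}--\ref{th:par4}, and your argument is precisely the generalization of those lemmas' computation (normalize $\gamma^{\pm s}$ by the dominant binomial coefficient $\binom{s}{k_m-1}$ of the largest Jordan block, kill the smaller blocks via the ratio $\binom{s}{k_i-1}/\binom{s}{k_m-1}\to 0$, identify the subsequential limits $a_j\,\mathrm{E}_{1,k_m}$, and read off the common kernel), finished by an appeal to Theorem~\ref{Th:ker}. The one caveat, inherited from the paper rather than introduced by you, is that Theorem~\ref{Th:ker} literally yields the \emph{complement} of the union of kernels, whereas the statement and the paper's own Lemmas~\ref{th:par1}--\ref{th:par4} identify $\mathrm{Eq}(\Gamma)$ with the kernel subspace itself; your write-up reproduces the paper's convention rather than resolving that discrepancy.
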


The proof of Theorem \ref{th:par-main} can be done using the lemmas \ref{th:par1}, \ref{th:par2}, \ref{th:par3}, \ref{th:par4}.

\section{Loxodromic Case}\label{sec:loxodromic}
\begin{theorem}
\label{th:loxodromic}
Let $\tilde{\gamma} \in \mathrm{PSL}(n+1,\mathbb{H})$ be a loxodromic element admitting a lift in $\mathrm{SL}(n+1,\mathbb{H})$ of the form
\[
\gamma =
\begin{bsmallmatrix}
    \mathrm{D}(|\lambda_1|, k_1) & & & \\
    & \mathrm{D}(|\lambda_2|, k_2) & & \\
    & & \ddots & \\
    & & & \mathrm{D}(|\lambda_p|, k_p)
\end{bsmallmatrix},
\]
where $k_1 + k_2 + \cdots + k_p = n$ and 
$|\lambda_1| < |\lambda_2| < \cdots < |\lambda_p|$.
Then the equicontinuity region of the cyclic group 
$\Gamma := \langle \tilde{\gamma} \rangle$ is
\[
\mathrm{Eq}(\Gamma)
= L\{e_1, e_2, \ldots, e_{k_1 + \cdots + k_{p-1}}\}
\, \cup \,
L\{e_{k_1 + 1}, e_{k_1 + 2}, \ldots, e_n\},
\]
where $\mathrm{D}(|\lambda_i|, k_i)$ denotes a diagonal block of size $k_i$ with diagonal entries of modulus $|\lambda_i|$.
\end{theorem}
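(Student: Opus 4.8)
The plan is to reduce the computation to Theorem~\ref{Th:ker}: once $Lim(\Gamma)$ and the union of the kernels of its elements are known, the equicontinuity region is read off directly. Since $\gamma$ is block diagonal with real positive diagonal blocks, for every $m\in\mathbb{Z}$ one has
\[
\gamma^{m}=
\begin{bsmallmatrix}
\mathrm{D}(|\lambda_1|^{m},k_1) & & \\
& \ddots & \\
& & \mathrm{D}(|\lambda_p|^{m},k_p)
\end{bsmallmatrix},
\]
so, in contrast with the parabolic lemmas, there are no binomial factors to track and the whole analysis is governed by which diagonal block carries the dominant modulus. By Lemma~\ref{lem:p1} every sequence of distinct powers admits a subsequence converging in $\mathrm{QP}(n+1,\mathbb{H})$, so it suffices to identify all such limits.

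First I would handle the forward powers. For $m\to+\infty$ the block with modulus $|\lambda_p|$ dominates, so I normalize by $|\lambda_p|^{m}$; since $|\lambda_i|<|\lambda_p|$ for $i<p$ we have $(|\lambda_i|/|\lambda_p|)^{m}\to 0$, while the last block stays equal to the identity on its coordinates. Hence
\[
\frac{1}{|\lambda_p|^{m}}\,\gamma^{m}\longrightarrow
g^{+}=
\begin{bsmallmatrix}
\mathbf{0} & \\
& I_{k_p}
\end{bsmallmatrix}\in \mathrm{QP}(n+1,\mathbb{H}),
\qquad
\ker(g^{+})=L\{e_1,\ldots,e_{k_1+\cdots+k_{p-1}}\}.
\]
Symmetrically, for the backward powers I would normalize $\gamma^{-m}$ by $|\lambda_1|^{-m}$, which now dominates because $|\lambda_1|$ is the smallest modulus; every block but the first collapses and the first tends to the identity, producing a limit $g^{-}$ with $\ker(g^{-})=L\{e_{k_1+1},\ldots,e_n\}$. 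Because the diagonal blocks are real the normalizing factors are real, so no unit quaternion survives in the limit and the usual $|a|=1$ ambiguity is trivial here.

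Finally I would argue that $g^{+}$ and $g^{-}$ exhaust $Lim(\Gamma)$: any sequence of distinct powers has $|m_j|\to\infty$, and after passing to a subsequence we may assume $m_j\to+\infty$ or $m_j\to-\infty$, whence the associated limit is $g^{+}$ or $g^{-}$. Therefore
\[
\bigcup_{g\in Lim(\Gamma)}\ker(g)=L\{e_1,\ldots,e_{k_1+\cdots+k_{p-1}}\}\cup L\{e_{k_1+1},\ldots,e_n\},
\]
a finite union of projective subspaces and hence already closed, so the closure in Theorem~\ref{Th:ker} changes nothing; feeding this union into Theorem~\ref{Th:ker} then completes the identification of $\mathrm{Eq}(\Gamma)$ as claimed. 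I expect the only genuinely substantive point to be the verification that the strict total ordering $|\lambda_1|<\cdots<|\lambda_p|$ forces exactly these two dominant blocks—so that no intermediate eigenspace can contribute a further limit kernel—together with the index bookkeeping that places the two complementary subspaces; the passage between $\mathrm{SL}$-lifts and $\mathrm{PSL}$ introduces only the harmless sign ambiguity and does not affect the kernels.
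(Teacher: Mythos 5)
Your proposal is correct and follows essentially the same route as the paper: compute the block-diagonal powers, normalize the forward powers by $|\lambda_p|^{m}$ and the backward powers by the reciprocal of the smallest modulus, identify the two pseudo-projective limits and their kernels, and conclude via Theorem~\ref{Th:ker}. Your choice of $|\lambda_1|^{-m}$ as the normalizer for $\gamma^{-m}$ is the right one (the paper's displayed factor $|\lambda_p|^{-k}$ and the block size $k_p$ in $\gamma''$ appear to be typos for $|\lambda_1|^{-k}$ and $k_1$), and your extra remarks on exhausting $Lim(\Gamma)$ and on the closedness of the finite union of subspaces only make the argument more complete.
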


\begin{proof}
The $k$-th power of $\gamma$ is given by
\[
\gamma^k =
\begin{bsmallmatrix}
    \mathrm{D}(|\lambda_1|^k, k_1) & & & \\
    & \mathrm{D}(|\lambda_2|^k, k_2) & & \\
    & & \ddots & \\
    & & & \mathrm{D}(|\lambda_p|^k, k_p)
\end{bsmallmatrix}.
\]
If a subsequence of $\frac{1}{|\lambda_p|^k}\gamma^k$, converges then it must be of the form
\[
\gamma' =
\begin{bsmallmatrix}
    \mathbf{0} & & & \\
    & \ddots & & \\
    & & \mathbf{0} & \\
    & & & \mathrm{D}(|\alpha|, k_p)
\end{bsmallmatrix}  \in \mathrm{QP}(n+1,\mathbb{H})
\]
where $|\alpha| = 1$. It follows that 
\[
\ker(\gamma') = L\{e_1, \ldots, e_{k_1 + \cdots + k_{p-1}}\}.
\]

Similarly, every convergent subsequence of $\frac{1}{|\lambda_p|^{-k}}\gamma^{-k}$ converges to
\[
\gamma'' =
\begin{bsmallmatrix}
    \mathrm{D}(|\alpha|, k_p) & & & \\
    & \ddots & & \\
    & & \mathbf{0} & \\
    & & & \mathbf{0}
\end{bsmallmatrix}  \in \mathrm{QP}(n+1,\mathbb{H})
\]
and 
\[
\ker(\gamma'') = L\{e_{k_1 + 1}, \ldots, e_n\}.
\]
Combining these, we obtain
\[
\mathrm{Eq}(\Gamma)
= L\{e_1, e_2, \ldots, e_{k_1 + \cdots + k_{p-1}}\}
\, \cup \,
L\{e_{k_1 + 1}, e_{k_1 + 2}, \ldots, e_n\}.
\]
This completes the proof. 
\end{proof}

\section{Equicontinuity regions of cyclic subgroups generated by loxoparabolic element} \label{sec:loxo}
    \begin{lemma}\label{th:loxo1}
Let $\tilde{\gamma}\in \mathrm{PSL}(n+1,\mathbb{H})$ be loxoparabolic, and let a lift in 
$\mathrm{SL}(n+1,\mathbb{H})$ have the block form
$$
\gamma=
\begin{bmatrix}
\lambda_1\,\mathrm{J}(1,k_1) & 0\\[2mm]
0 & \lambda_2\,\mathrm{J}(1,k_2)
\end{bmatrix},
\qquad 0<|\lambda_1|<|\lambda_2|,\quad k_1+k_2=n+1.
$$
Then the equicontinuity region of the cyclic group $\Gamma=\langle\tilde{\gamma}\rangle$ is
$$
\mathrm{Eq}(\Gamma)
=
{L}\{e_1,\dots,e_n\}
\ \cup\
{L}\{e_1,\dots,e_{k_1-1},\,e_{k_1+1},\dots,e_{n+1}\}.
$$
\end{lemma}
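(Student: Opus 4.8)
The plan is to apply \thmref{Th:ker}, which reduces the computation of $\mathrm{Eq}(\Gamma)$ to determining the kernels of all pseudo-projective limits of the cyclic family $\{\gamma^m : m \in \Z\}$. Since $\gamma$ is block diagonal,
\[
\gamma^m = \begin{bmatrix} \lambda_1^m\,\mathrm{J}(1,k_1)^m & 0 \\ 0 & \lambda_2^m\,\mathrm{J}(1,k_2)^m \end{bmatrix},
\]
so the analysis splits into the forward powers ($m \to +\infty$) and the backward powers ($m \to -\infty$), each handled by comparing the growth of the two diagonal blocks.

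First I would treat the forward direction. Using the formula for $\mathrm{J}(1,k_i)^m$ from the Notations, the entry of largest modulus in the $i$-th block is the $(1,k_i)$-entry, of size $|\lambda_i|^m \binom{m}{k_i-1}$. Because $|\lambda_1| < |\lambda_2|$, the exponential factor $|\lambda_2|^m$ dominates the polynomial binomial factors, so the second block grows strictly faster. Normalizing $\gamma^m$ by $|\lambda_2|^m \binom{m}{k_2-1}$ and passing to a subsequence along which the unit-modulus phase $\lambda_2^m/|\lambda_2|^m$ converges to some $a$ with $|a|=1$, the first block tends to $\mathbf{0}$ and the second to $a\,\mathrm{E}_{1,k_2}$. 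Every convergent subsequence of the forward powers therefore has a limit of the form $g_+ = \begin{bsmallmatrix} \mathbf{0} & \\ & a\,\mathrm{E}_{1,k_2}\end{bsmallmatrix}$, whose only nonzero entry sits at position $(k_1+1,\,n+1)$; hence $\ker(g_+) = L\{e_1,\dots,e_n\}$.

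Next I would treat the backward direction symmetrically, using the inverse-power formula for $\mathrm{J}(1,k_i)^{-m}$ recorded in the Notations. Here the dominant entry of the $i$-th block has modulus $|\lambda_i|^{-m}\binom{m+k_i-2}{k_i-1}$, and since $|\lambda_1|^{-m} > |\lambda_2|^{-m}$, the \emph{first} block now dominates. Normalizing $\gamma^{-m}$ by $|\lambda_1|^{-m}\binom{m+k_1-2}{k_1-1}$ and again extracting a subsequence along which the phase converges, the second block tends to $\mathbf{0}$ and the first to $b\,\mathrm{E}_{1,k_1}$ with $|b|=1$, giving a limit $g_- = \begin{bsmallmatrix} b\,\mathrm{E}_{1,k_1} & \\ & \mathbf{0}\end{bsmallmatrix}$ whose unique nonzero entry is at $(1,k_1)$, so $\ker(g_-) = L\{e_1,\dots,e_{k_1-1},e_{k_1+1},\dots,e_{n+1}\}$. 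Since $\gamma$ is loxoparabolic it has infinite order and no bounded subsequence contributes, so $Lim(\Gamma)$ consists precisely of maps of the types $g_+$ and $g_-$: all forward limits share the kernel $L\{e_1,\dots,e_n\}$ and all backward limits share the kernel $L\{e_1,\dots,e_{k_1-1},e_{k_1+1},\dots,e_{n+1}\}$. These two hyperplanes are distinct (they omit $e_{n+1}$ and $e_{k_1}$ respectively, and $k_1 \le n$), and their union is already closed, so combining the two via \thmref{Th:ker} yields the claimed description.

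The step I expect to be the main obstacle is the careful comparison of the mixed exponential–polynomial growth rates: one must verify that the exponential gap $|\lambda_2|/|\lambda_1| > 1$ genuinely overwhelms the competing binomial factors $\binom{m}{k_i-1}$ (and, for the inverse, $\binom{m+k_i-2}{k_i-1}$), so that after normalization the subdominant block truly vanishes rather than contributing a lower-order but nonzero term. Establishing this cleanly — and simultaneously controlling the phase $\lambda_i^m/|\lambda_i|^m$ by passing to subsequences so that the normalized limit exists as a genuine element of $\mathrm{QP}(n+1,\H)$ — is the technical heart of the argument.
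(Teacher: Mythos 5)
Your proposal is correct and follows essentially the same route as the paper's own proof: normalize the forward powers by the dominant entry $|\lambda_2|^m\binom{m}{k_2-1}$ of the second block and the backward powers by $|\lambda_1|^{-m}\binom{m+k_1-2}{k_1-1}$ of the first block, identify the pseudo-projective limits $g_+$ and $g_-$ supported at positions $(k_1+1,n+1)$ and $(1,k_1)$, and read off their kernels. Your extra care in extracting a subsequence so that the unit-modulus phase $\lambda_i^m/|\lambda_i|^m$ converges is a welcome refinement; note only that Theorem~\ref{Th:ker} literally yields the \emph{complement} of $\ker(g_+)\cup\ker(g_-)$, a discrepancy already present in the lemma's statement and in the paper's own final line, so it is inherited rather than introduced by you.
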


\begin{proof}
By taking power of the Jordan block,
$$
\mathrm{J}(1,k)^m=\left[\binom{m}{\,j-i\,}\right]_{1\le i\le j\le k},
$$
the maximal entry of $\mathrm{J}(1,k_2)^m$ is the $(1,k_2)$-th entry, which is $\binom{m}{k_2-1}$, and of $\mathrm{J}(1,k_1)^m$ is $(1,k_1)$-th entry $\binom{m}{k_1-1}$. Since
$$
\gamma^m=
\begin{bmatrix}
\lambda_1^m \mathrm{J}(1,k_1)^m & 0\\[1mm]
0 & \lambda_2^m \mathrm{J}(1,k_2)^m
\end{bmatrix}.
$$

Normalize $\gamma^m$ by $\lambda_2^m \binom{m}{k_2-1}$. Because $|\lambda_1/\lambda_2|<1$, the upper-left block tends to $0$, and the lower-right block tends entrywise to $\mathrm{E}_{1,k_2}$. Therefore,
$$
\gamma^m \longrightarrow
\gamma^+:=
\begin{bmatrix}
0 & 0\\[1mm]
0 & \mathrm{E}_{1,k_2}
\end{bmatrix} \in \mathrm{QP}(n+1,\mathbb{H}),
$$
so $\gamma^+\in Lim(\langle \tilde{\gamma}\rangle)$, with
$$
\ker(\gamma^+)=L\{e_1,\dots,e_n\}.
$$

Similarly, using
$$
\mathrm{J}(1,k)^{-m}=\left[ (-1)^{j-i}\binom{m+j-i-1}{\,j-i\,}\right]_{i\le j},
$$
the dominant entry of the first block is $(1,k_1)$. Normalizing $\gamma^{-m}$ by $\lambda_1^{-m} \binom{m+k_1-2}{k_1-1}$ gives
$$
\gamma^{-m}\longrightarrow
\gamma^-:=
\begin{bmatrix}
\mathrm{E}_{1,k_1} & 0\\[1mm]
0 & 0
\end{bmatrix} \in \mathrm{QP}(n+1,\mathbb{H}),
$$
so $\gamma^-\in Lim(\langle \tilde{\gamma}\rangle)$, with
$$
\ker(\gamma^-)={L}\{e_1,\dots,e_{k_1-1}, e_{k_1+1},\dots,e_{n+1}\}.
$$

Therefore the equicontinuity region is the complement of the union of the kernel hyperplanes of the forward and backward pseudo-projective limits:
$$
\mathrm{Eq}(\Gamma)=\ker(\gamma^+)\cup \ker(\gamma^-)
={L}\{e_1,\dots,e_n\}\ \cup\ {L}\{e_1,\dots,e_{k_1-1}, e_{k_1+1},\dots,e_{n+1}\},
$$
as claimed.
\end{proof}

\begin{theorem}
\label{th:loxoparabolic}
Let $\gamma \in \mathrm{PSL}(n+1,\mathbb{H})$ be loxoparabolic transformation such that lift of $\gamma$ in $\mathrm{SL}(n+1,\H)$ still denoted by $\gamma$ is of the form;
\[
\gamma =
\begin{bsmallmatrix}
\lambda_1 A_1 & & \\
& \ddots & \\
& & \lambda_k A_k
\end{bsmallmatrix},
\qquad |\lambda_1|<|\lambda_2|<\cdots<|\lambda_k|,
\]
where each $A_i$ is a parabolic block of size $m_i\times m_i$ for $i=1,2,\ldots, k$.
Let $\Gamma=\langle\gamma\rangle$ be the cyclic subgroup generated by $\gamma$.
Then the equicontinuity region of $\Gamma$ is
\[
\mathrm{Eq}(\Gamma)
=
{L}\Big\{
\mathrm{Eq}(A_1),\;
e_{m_1+1},\;\dots,\;e_{n+1},
\Big\} \cup {L}\Big\{
e_1, \dots,
e_{m_{k-1}},\mathrm{Eq}(A_k)
\Big\}
\]
where
$\mathrm{Eq}(A_i)$ denotes the equicontinuity region of the restriction of the parabolic block $A_i$.
\end{theorem}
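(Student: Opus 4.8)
The plan is to reduce \thmref{th:loxoparabolic} to the two-block, single-Jordan-block situation of \lemref{th:loxo1}, fed by the intra-block parabolic analysis of \thmref{th:par-main}, and then to assemble the answer through \thmref{Th:ker}. Writing $M_j := m_1 + \cdots + m_j$ for the cumulative block sizes, the block-diagonal form gives
\[
\gamma^m = \begin{bsmallmatrix} \lambda_1^m A_1^m & & \\ & \ddots & \\ & & \lambda_k^m A_k^m \end{bsmallmatrix},
\]
so the growth rate of the $i$-th block is $|\lambda_i|^m$ times the polynomial growth of the parabolic power $A_i^m$, whose dominant entry grows like $\binom{m}{d_i-1}$, where $d_i$ is the size of the largest Jordan sub-block of $A_i$. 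The strict ordering $|\lambda_1| < \cdots < |\lambda_k|$ is the decisive structural hypothesis: it forces a single dominant block in each time direction, the exponential gap overpowering every polynomial factor.

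First I would treat the forward limits. Normalizing $\gamma^m$ by the dominant scalar $\lambda_k^m \binom{m}{d_k-1}$ and passing to a subsequence along which the relevant unit-modulus phases converge, every block $i<k$ is killed (since the polynomial ratio $\binom{m}{d_i-1}/\binom{m}{d_k-1}$ is swamped by the exponentially small $|\lambda_i/\lambda_k|^m$), while the $k$-th block converges to the forward pseudo-projective limit of the parabolic transformation $A_k$ furnished by \thmref{th:par-main}. The resulting limit $\gamma^+ \in \mathrm{QP}(n+1,\H)$ is supported on block $k$ alone, whence
\[
\ker(\gamma^+) = L\{e_1, \dots, e_{M_{k-1}}, \mathrm{Eq}(A_k)\},
\]
which is precisely the second subspace in the statement.

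The backward limits are handled symmetrically. Normalizing $\gamma^{-m}$ by $\lambda_1^{-m}\binom{m+d_1-2}{d_1-1}$ now makes block $1$ dominant (its modulus being strictly smallest), every block $i>1$ vanishes, and block $1$ converges to the backward parabolic limit of $A_1$, giving $\ker(\gamma^-) = L\{\mathrm{Eq}(A_1), e_{M_1+1}, \dots, e_{n+1}\}$, the first subspace. To conclude I would show that these exhaust the limit set: any sequence of distinct powers converging in $\mathrm{QP}(n+1,\H)$ has unbounded exponents, those tending to $+\infty$ produce forward limits and those tending to $-\infty$ backward ones, since the single dominant block is fixed by the sign of the exponent together with the ordering of the $|\lambda_i|$. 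Passing to convergent phase subsequences only rescales the limit by a unit-modulus quaternion acting on its image and therefore leaves the kernel untouched, so $\overline{\bigcup_{g \in Lim(\Gamma)} \ker(g)} = \ker(\gamma^+) \cup \ker(\gamma^-)$, and \thmref{Th:ker} yields the claimed $\mathrm{Eq}(\Gamma)$ exactly as in \lemref{th:loxo1}.

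The main obstacle I anticipate lies in the bookkeeping of the two limit computations when a block $A_i$ is not a single Jordan block but a direct sum of several parabolic blocks of differing sizes: one must verify that the forward (respectively backward) normalization selects precisely the dominant Jordan sub-block inside the dominant $\lambda$-block, so that the surviving kernel directions are exactly those recorded by $\mathrm{Eq}(A_k)$ (respectively $\mathrm{Eq}(A_1)$) in \thmref{th:par-main}. The non-commutativity of $\H$ enters only through the limiting phases, which as noted affect the image but not the kernel; the genuinely delicate point is thus combinatorial rather than analytic, namely matching the binomial normalization of a mixed parabolic block to the indexing of $\mathrm{Eq}(A_i)$.
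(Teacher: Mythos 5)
Your proposal is correct and follows essentially the same route as the paper, whose own proof of Theorem~\ref{th:loxoparabolic} is only a one-line reduction to Lemma~\ref{th:loxo1}, the parabolic Lemmas~\ref{th:par1}--\ref{th:par4} and Theorem~\ref{Th:ker}; your forward/backward normalizations by $\lambda_k^m\binom{m}{d_k-1}$ and $\lambda_1^{-m}\binom{m+d_1-2}{d_1-1}$ are exactly the mechanism of Lemma~\ref{th:loxo1} extended to $k$ blocks, with the dominant $\lambda$-block selected by the exponential gap and the intra-block limit supplied by the parabolic lemmas. The bookkeeping point you flag --- that the kernel of the \emph{one-sided} limit of a mixed parabolic block must be matched against $\mathrm{Eq}(A_i)$ as indexed in Theorem~\ref{th:par-main} --- is a genuine subtlety, but it is one the paper itself leaves unaddressed rather than a defect introduced by your argument.
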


The proof of Theorem \ref{th:loxoparabolic} follows from Lemma \ref{th:loxo1}, \ref{th:par1}, \ref{th:par2}, \ref{th:par2}, \ref{th:par3}, \ref{th:par4} and Theorem \ref{th:loxodromic}.

    \section*{Declaration of competent interest }
    The authors declare no competing interests.
    \section*{Data availability}
    No data was used for the research described in the article.
    \section*{Acknowledgments}
		Dutta acknowledges the Mizoram University for the Research and Promotion Grant F.No.A.1-1/MZU(Acad)/14/25-26. Gongopadhyay acknowledges ANRF research Grant CRG/2022/003680, and DST-JSPS Grant DST/INT/JSPS/P-323/2020. Mondal acknowledges the  CSIR grant no. 09/0947(12987)/2021-EMR-I during the course of this work.

    \end{document}